\documentclass[12pt]{article}
\usepackage[utf8]{inputenc}
\usepackage[T1]{fontenc}
\usepackage{url}
\usepackage{graphicx}
\usepackage{amsmath}
\usepackage{amsfonts}
\usepackage{amsthm}
\usepackage[margin=1in]{geometry}

\newcommand{\ud}{\,\mathrm{d}}
\numberwithin{equation}{section}
\begin{document}
\date{}

\title{\huge{\textbf{Scaling laws and the rate of convergence in thin magnetic films}}}
\author{\Large{Davit Harutyunyan}\\
\textit{University of Utah}}

\maketitle

\textbf{Abstract.} We study static 180 degree domain walls in thin infinite magnetic films. We establish the scaling of the minimal energy by $\Gamma$-convergence and the energy minimizer profile, which turns out to be the so called \textit{transverse wall} as predicted in earlier numerical and experimental work. Surprisingly, the minimal energy decays faster than the area of the film cross section at an infinitesimal cross section diameter.
We establish a rate of convergence of the rescaled energies as well.\\
 \newline
\textbf{Keywords:}\quad  Thin magnetic films; Magnetic wires, Magnetization reversal; Domain wall

\tableofcontents

\section{Introduction}

In the theory of micromagnetics the energy of micromagnetics is given by
$$E(m)=A_{ex}\int_\Omega|\nabla m|^2+K_d\int_{\mathbb R^3}|\nabla u|^2+Q\int_\Omega\varphi(m)-2\int_\Omega H_{ext}\cdot m,$$
where $\Omega\in \mathbb R^3$ is a region occupied by a ferromagnetic body, $m\colon\Omega\to\mathbb S^2$ with $m=0$ in $\mathbb R^3\setminus \Omega$ is the magnetization vector, $A_{ex},$ $K_d$, $Q$ are material parameters, $H_{ext}$ is the externally applied magnetic field, $\varphi$ is the anisotropy energy density and $u$ is induced field potential, obtained from Maxwell's equations of magnetostatics,
$$
\begin{cases}
\mathrm{curl} H_{ind}=0 & \quad\text{in}\quad \mathbb R^3\\
\mathrm{div}(H_{ind}+m)=0 & \quad\text{in}\quad \mathbb R^3,
\end{cases}
$$
where $H_{ind}=\nabla u.$
Namely $u$ is a weak solution of
$$\triangle  u= \mathrm{div} m\qquad \text{in}\qquad \mathbb R^3.$$

According to the theory of micromagnetics, stable magnetization patterns are described by the minimizers of the micromagnetic energy functional, e.g. [\ref{bib:Hu.Sc},\ref{bib:De.Ko.Mu.Ot1},\ref{bib:De.Ko.Mu.Ot2},\ref{bib:De.Ko.Mu.Ot.Sch}]. In the resent years the study of thin structures in  micromagnetics, in particular thin films and wires have been of great interest, see [\ref{bib:At.Xi.Fa.Al.Pe.Co},\ref{bib:Be.Ni.Kn.Ts.Er},\ref{bib:Ca.La},\ref{bib:Fo.Sc.Su.Sc.Ts.Di.Fi},\ref{bib:K.Kuhn},\ref{bib:Na.Th},\ref{bib:Ni.We.Ba.Ki.Go.Fi.Kr},\ref{bib:Pi.Ge.De.Le.Fe.Le.Ou.Fe},\ref{bib:SS},\ref{bib:Wi.No.Us}] for nanowires and
[\ref{bib:Ca.Me.Ot},\ref{bib:De.Ko.Mu.Ot2},\ref{bib:De.Ko.Mu.Ot.Sch},\ref{bib:Ga.We},\ref{bib:Ko.Sl},\ref{bib:Na.Th}].
It was suggested in [\ref{bib:At.Xi.Fa.Al.Pe.Co}] that magnetic nanowires can be used as storage devices. It is known that the magnetization pattern reversal time is closely related to the writing and reading speed of such a device, thus it has been suggested to study the magnetization reversal and switching processes. In [\ref{bib:Fo.Sc.Su.Sc.Ts.Di.Fi}] the magnetizetion reversal process has been studied numerically in cobalt nanowires by the Landau-Lishitz-Gilbert equation. In thin wires the transverse mode has been observed: the magnetization in almost constant on each cross section forming a domain wall that propagates along the wire, while in relatively thick wires the vortex wall has been observed: the magnetization is approximately tangential to the boundary and forms a vortex which propagates along the wire. In [\ref{bib:He.Ki}] similar study has been done for thin nickel wires and the same results have been observed. When a homogenous external field is applied in the axial direction of the wire facing the homogenous magnetization direction, then at a critical strength the reversal of the magnetization typically starts at one end of the wire creating a domain wall, which moves along the wire. The domain wall separates the reversed and the not yet reversed parts of the wire. In [\ref{bib:Ca-Al.Otto}] Cantero-Alvarez and Otto considered the problem of finding the scaling of critical field in terms of the thin film cross section and material parameters. The authors found four different scaling and corresponding four different regimes. In Figure 1 one can see the transverse and the vortex wall longitudinal and cross section pictures for wires with a rectangular cross section. \\

\setlength{\unitlength}{1mm}
\begin{picture}(180,93)

\put(0,15){\line(0,1){71}}
\put(0,15){\line(1,0){20}}
\put(20,15){\line(0,1){71}}
\put(0,86){\line(1,0){20}}

\put(2,15){\vector(0,1){4}}
\put(5,15){\vector(0,1){4}}
\put(8,15){\vector(0,1){4}}
\put(11,15){\vector(0,1){4}}
\put(14,15){\vector(0,1){4}}
\put(17,15){\vector(0,1){4}}

\put(2,20){\vector(0,1){4}}
\put(5,20){\vector(0,1){4}}
\put(8,20){\vector(0,1){4}}
\put(11,20){\vector(0,1){4}}
\put(14,20){\vector(0,1){4}}
\put(17,20){\vector(0,1){4}}

\put(2,25){\vector(1,3){1.2}}
\put(5,25){\vector(1,3){1.2}}
\put(8,25){\vector(1,3){1.2}}
\put(11,25){\vector(1,3){1.2}}
\put(14,25){\vector(1,3){1.2}}
\put(17,25){\vector(1,3){1.2}}

\put(2,29){\vector(1,3){1.2}}
\put(5,29){\vector(1,3){1.2}}
\put(8,29){\vector(1,3){1.2}}
\put(11,29){\vector(1,3){1.2}}
\put(14,29){\vector(1,3){1.2}}
\put(17,29){\vector(1,3){1.2}}

\put(2,33){\vector(1,2){1.8}}
\put(5,33){\vector(1,2){1.8}}
\put(8,33){\vector(1,2){1.8}}
\put(11,33){\vector(1,2){1.8}}
\put(14,33){\vector(1,2){1.8}}
\put(17,33){\vector(1,2){1.8}}

\put(2,37){\vector(1,1){3.6}}
\put(5,37){\vector(1,1){3.6}}
\put(8,37){\vector(1,1){3.6}}
\put(11,37){\vector(1,1){3.6}}
\put(14,37){\vector(1,1){3.6}}
\put(17,37){\vector(1,1){3.6}}

\put(2,41){\vector(2,1){3.6}}
\put(5,41){\vector(2,1){3.6}}
\put(8,41){\vector(2,1){3.6}}
\put(11,41){\vector(2,1){3.6}}
\put(14,41){\vector(2,1){3.6}}
\put(17,41){\vector(2,1){3.6}}

\put(2,43.5){\vector(3,1){3.6}}
\put(5,43.5){\vector(3,1){3.6}}
\put(8,43.5){\vector(3,1){3.6}}
\put(11,43.5){\vector(3,1){3.6}}
\put(14,43.5){\vector(3,1){3.6}}
\put(17,43.5){\vector(3,1){3.6}}

\put(2,45.5){\vector(4,1){4}}
\put(5,45.5){\vector(4,1){4}}
\put(8,45.5){\vector(4,1){4}}
\put(11,45.5){\vector(4,1){4}}
\put(14,45.5){\vector(4,1){4}}
\put(17,45.5){\vector(4,1){4}}

\put(1,47.5){\vector(1,0){4}}
\put(6,47.5){\vector(1,0){4}}
\put(11,47.5){\vector(1,0){4}}
\put(16,47.5){\vector(1,0){4}}

\put(2,49.5){\vector(4,-1){4}}
\put(5,49.5){\vector(4,-1){4}}
\put(8,49.5){\vector(4,-1){4}}
\put(11,49.5){\vector(4,-1){4}}
\put(14,49.5){\vector(4,-1){4}}
\put(17,49.5){\vector(4,-1){4}}

\put(2,51.5){\vector(3,-1){3.6}}
\put(5,51.5){\vector(3,-1){3.6}}
\put(8,51.5){\vector(3,-1){3.6}}
\put(11,51.5){\vector(3,-1){3.6}}
\put(14,51.5){\vector(3,-1){3.6}}
\put(17,51.5){\vector(3,-1){3.6}}

\put(2,54){\vector(2,-1){3.6}}
\put(5,54){\vector(2,-1){3.6}}
\put(8,54){\vector(2,-1){3.6}}
\put(11,54){\vector(2,-1){3.6}}
\put(14,54){\vector(2,-1){3.6}}
\put(17,54){\vector(2,-1){3.6}}

\put(2,58){\vector(1,-1){3.6}}
\put(5,58){\vector(1,-1){3.6}}
\put(8,58){\vector(1,-1){3.6}}
\put(11,58){\vector(1,-1){3.6}}
\put(14,58){\vector(1,-1){3.6}}
\put(17,58){\vector(1,-1){3.6}}

\put(2,62){\vector(1,-2){1.8}}
\put(5,62){\vector(1,-2){1.8}}
\put(8,62){\vector(1,-2){1.8}}
\put(11,62){\vector(1,-2){1.8}}
\put(14,62){\vector(1,-2){1.8}}
\put(17,62){\vector(1,-2){1.8}}

\put(2,66){\vector(1,-3){1.2}}
\put(5,66){\vector(1,-3){1.2}}
\put(8,66){\vector(1,-3){1.2}}
\put(11,66){\vector(1,-3){1.2}}
\put(14,66){\vector(1,-3){1.2}}
\put(17,66){\vector(1,-3){1.2}}

\put(2,70){\vector(1,-3){1.2}}
\put(5,70){\vector(1,-3){1.2}}
\put(8,70){\vector(1,-3){1.2}}
\put(11,70){\vector(1,-3){1.2}}
\put(14,70){\vector(1,-3){1.2}}
\put(17,70){\vector(1,-3){1.2}}

\put(2,75){\vector(0,-1){4}}
\put(5,75){\vector(0,-1){4}}
\put(8,75){\vector(0,-1){4}}
\put(11,75){\vector(0,-1){4}}
\put(14,75){\vector(0,-1){4}}
\put(17,75){\vector(0,-1){4}}

\put(2,80){\vector(0,-1){4}}
\put(5,80){\vector(0,-1){4}}
\put(8,80){\vector(0,-1){4}}
\put(11,80){\vector(0,-1){4}}
\put(14,80){\vector(0,-1){4}}
\put(17,80){\vector(0,-1){4}}

\put(2,85){\vector(0,-1){4}}
\put(5,85){\vector(0,-1){4}}
\put(8,85){\vector(0,-1){4}}
\put(11,85){\vector(0,-1){4}}
\put(14,85){\vector(0,-1){4}}
\put(17,85){\vector(0,-1){4}}


\put(0,0){\textbf{The transverse wall}}

\put(60,0){\textbf{The vortex wall}}

\put(27,40){\line(0,1){10}}
\put(27,40){\line(1,0){21}}
\put(48,40){\line(0,1){10}}
\put(27,50){\line(1,0){21}}

\put(28,41){\vector(1,0){3}}
\put(28,43){\vector(1,0){3}}
\put(28,45){\vector(1,0){3}}
\put(28,47){\vector(1,0){3}}
\put(28,49){\vector(1,0){3}}

\put(32,41){\vector(1,0){3}}
\put(32,43){\vector(1,0){3}}
\put(32,45){\vector(1,0){3}}
\put(32,47){\vector(1,0){3}}
\put(32,49){\vector(1,0){3}}

\put(36,41){\vector(1,0){3}}
\put(36,43){\vector(1,0){3}}
\put(36,45){\vector(1,0){3}}
\put(36,47){\vector(1,0){3}}
\put(36,49){\vector(1,0){3}}

\put(40,41){\vector(1,0){3}}
\put(40,43){\vector(1,0){3}}
\put(40,45){\vector(1,0){3}}
\put(40,47){\vector(1,0){3}}
\put(40,49){\vector(1,0){3}}

\put(44,41){\vector(1,0){3}}
\put(44,43){\vector(1,0){3}}
\put(44,45){\vector(1,0){3}}
\put(44,47){\vector(1,0){3}}
\put(44,49){\vector(1,0){3}}


\put(55,15){\line(0,1){71}}
\put(55,15){\line(1,0){20}}
\put(55,86){\line(1,0){20}}
\put(75,15){\line(0,1){71}}

\put(55,28){\line(1,2){20}}
\put(75,28){\line(-1,2){20}}

\put(83,30){\line(0,1){30}}
\put(83,30){\line(1,0){30}}
\put(113,30){\line(0,1){30}}
\put(83,60){\line(1,0){30}}

\put(83,30){\line(1,1){30}}
\put(83,60){\line(1,-1){30}}
\put(90.5,30){\line(1,2){15}}
\put(113,37.5){\line(-2,1){30}}

\thicklines

\put(93,55){\vector(1,0){6}}
\put(105.5,55){\vector(1,-1){3.7}}

\put(103,35){\vector(-1,0){6}}
\put(90.5,35){\vector(-1,1){3.7}}

\put(88,40){\vector(0,1){6}}
\put(88,52.5){\vector(1,1){3.7}}

\put(108,50){\vector(0,-1){6}}
\put(108,37.5){\vector(-1,-1){3.7}}

\put(65,47){\vector(0,-1){5}}
\put(63.5,44){\vector(0,-1){5}}
\put(66.5,44){\vector(0,-1){5}}

\put(62,40){\vector(0,-1){5}}
\put(65,40){\vector(0,-1){5}}
\put(68,40){\vector(0,-1){5}}

\put(60.5,36){\vector(0,-1){5}}
\put(66.5,36){\vector(0,-1){5}}
\put(63.5,36){\vector(0,-1){5}}
\put(69.5,36){\vector(0,-1){5}}

\put(58,30){\vector(0,-1){5}}
\put(63,30){\vector(0,-1){5}}
\put(68,30){\vector(0,-1){5}}
\put(73,30){\vector(0,-1){5}}

\put(58,23){\vector(0,-1){5}}
\put(63,23){\vector(0,-1){5}}
\put(68,23){\vector(0,-1){5}}
\put(73,23){\vector(0,-1){5}}


\put(65,49){\vector(0,1){5}}

\put(63.5,52){\vector(0,1){5}}
\put(66.5,52){\vector(0,1){5}}

\put(62,56){\vector(0,1){5}}
\put(65,56){\vector(0,1){5}}
\put(68,56){\vector(0,1){5}}

\put(60.5,60){\vector(0,1){5}}
\put(66.5,60){\vector(0,1){5}}
\put(63.5,60){\vector(0,1){5}}
\put(69.5,60){\vector(0,1){5}}

\put(58,66){\vector(0,1){5}}
\put(63,66){\vector(0,1){5}}
\put(68,66){\vector(0,1){5}}
\put(73,66){\vector(0,1){5}}

\put(63,73){\vector(0,1){5}}
\put(68,73){\vector(0,1){5}}
\put(73,73){\vector(0,1){5}}
\put(58,73){\vector(0,1){5}}

\put(58,80){\vector(0,1){5}}
\put(63,80){\vector(0,1){5}}
\put(68,80){\vector(0,1){5}}
\put(73,80){\vector(0,1){5}}


\put(62,49.5){\vector(0,1){3}}
\put(62,46.5){\vector(0,-1){3}}
\put(68,49.5){\vector(0,1){3}}
\put(68,46.5){\vector(0,-1){3}}

\put(62,48){\circle*{0.7}}
\put(68,48){\circle*{0.7}}
\put(58,48){\circle*{0.7}}
\put(72,48){\circle*{0.7}}

\put(58,46){\vector(0,-1){3}}
\put(58,41.5){\vector(0,-1){4.5}}
\put(58,50){\vector(0,1){3}}
\put(58,55){\vector(0,1){4.5}}

\put(72,46){\vector(0,-1){3}}
\put(72,41.5){\vector(0,-1){4.5}}
\put(72,50){\vector(0,1){3}}
\put(72,55){\vector(0,1){4.5}}


\put(5,7){\textbf{ Figure 1.}}\\
\end{picture}
\\
\\

A distinctive crossover has been observed between the two different modes, which is expected to occur at a critical diameter of the wire. It has been suggested that the magnetization switching process can be understood by analyzing the micromagnetics energy minimization problem for different diameters of the cross section. In [\ref{bib:K.Kuhn}] K. K\"uhn studied $180$ degree static domain walls in magnetic wires with circular cross sections. K\"uhn proved that indeed, the transverse mode must occur in thin magnetic wires as was predicted by experimental and numerical analysis before in [\ref{bib:Fo.Sc.Su.Sc.Ts.Di.Fi}] and in [\ref{bib:He.Ki}],
while in thick wires a vortex wall has the optimal energy scaling. Some of the results proven by K. K\"uhn for thin wires has been later generalized in [\ref{bib:Davit.dissertation}] to any wires with a bounded, Lipschitz and rotationally symmetric cross sections, see also [\ref{bib:Davit.nanowires}]. Slastikov and Sonnenberg proved the energy $\Gamma$-convergence result in [\ref{bib:SS}] for any $C^1$ cross sections in finite curved wires. It is shown
in [\ref{bib:K.Kuhn}], [\ref{bib:Davit.dissertation}] and [\ref{bib:SS}] that the minimal energy scales like $d^2,$ where $d$ is the diameter of the wire, provided the wire cross section has comparable dimensions. It turns out that if the dimensions of the cross section are not comparable, then the minimal energies decay faster than $d^2$ and a logarithmic term occurs. In this paper we study the minimal energy scaling in infinite thin films, as both sides of the cross section go to zero, but one faster that the other. The minimal appears to scale like $d^2(\ln l-\ln d),$ where $0<d<l$ are the dimensions of the cross section. The paper is organized as follows: In section 2 we make some notations and formulate the main results. In section 3 we prove that for small cross section diameters the magnetostatic energy can ba approximated by a quadratic form in the second and the third components of the magnetization $m.$ In section 4 we prove when the diameter goes to zero the energy minimization problems $\Gamma$-converge to a one-dimensional problem. In section 5 we prove a rate of convergence on the minimal energies as the diameter of the film goes o=to zero. Finally, in section 6 we prove to auxiliary lemmas.

\section{The main results}

Denote $\Omega(l,d)=\mathbb R\times R(l,d)$, where $R(l,d)=[-l, l]\times[-d, d]$ and throughout this work it will be assumed that $0<d\leq l.$ Denote the aspect ratio $c=\frac{d}{l}.$ Consider the energy of micromagnetics without an external field and anisotropy energy, i.e., the energy of an isotropic ferromagnet with the absence of an external field:
$$E(m)=A_{ex}\int_\Omega|\nabla m|^2+K_d\int_{\mathbb R}|\nabla u|^2.$$
By scaling of all coordinates one can reach the situation when $A_{ex}=K_d,$ so we will henceforth assume that $A_{ex}=K_d=1.$ Denote
$$A(\Omega)=\{m\colon\Omega\to\mathbb S^2 \ : \ m\in H_{loc}^1(\Omega), \ E(m)<\infty\},$$
and also let us introduce $180$ degree domain walls
$$\tilde A(\Omega)=\{m\colon\Omega\to\mathbb S^2 \ : \ m-\bar e\in H^1(\Omega)\},$$
where
\begin{equation*}
\bar e(x,y,z) = \left\{
\begin{array}{rl}
(-1,0,0) & \text{if } \ \ x<-1 \\
(x,0,0)  & \text{if } \ \ -1\leq x \leq 1 \\
(1,0,0) & \text{if } \ \ 1<x \\
\end{array} \right.
\end{equation*}
Roughly speaking we are considering the set of all magnetizations that satisfy $\lim_{x\to\pm\infty}m(x,y,z)=\pm \vec e_x$ for all $y$ and $z.$
The target of this paper will be studying the minimal energy scaling and the minimizers in minimization problem
\begin{equation}
\label{minimization problem}
\inf_{m\in \tilde A(\Omega(l,d))}E(m)
\end{equation}

when $l\to 0$ and $c\to 0.$
It turns out that after rescaling the energy by a suitable factor, the new rescaled energies $\Gamma$-converge to a one dimensional energy, e.g. [\ref{bib:K.Kuhn},\ref{bib:Davit.dissertation},\ref{bib:SS}]. Consider sequences of domain-magnetization-energy triples $(\Omega(l_n,d_n), m^n, E(m^n))$ such that $d_n,l_n\to 0$ and $c_n=\frac{d_n}{l_n}\to 0$ as $n$ goes to infinity. Denote for simplicity $\Omega_n=\Omega(l_n,d_n)$, $A_n=A(\Omega(l_n,d_n)),$ $\tilde A_n=\tilde A(\Omega(l_n,d_n)).$
Set $\lambda_n=\frac{1}{c_n|\ln c_n|},$ $\mu_n=\frac{l_nd_n}{\lambda_n}$ and rescale the magnetization $m$ as follows: $\acute m(x,y,z)=m(\lambda_nx,l_ny,d_nz).$ Note that, in contrast to [\ref{bib:K.Kuhn},\ref{bib:SS},\ref{bib:Davit.nanowires}] we rescale $m$ in the $x$ direction as well.

Denote now $\acute E(\acute m^n)=\frac{E(m^n)}{\mu_n}$  and consider the rescaled minimization problems

\begin{equation}
\label{minimization problem, rescaled}
\inf_{m\in \tilde A_n}\acute E(\acute m)
\end{equation}

instead of the original problem

$$\inf_{m\in \tilde A_n}E(m). $$
 The rescaled energy functional will have the form:

$$\acute E(\acute m^n)=\frac{1}{\mu_n}\int_{\Omega(1,1)}\Big(|\partial_x \acute m^n(\xi)|^2+\frac{\lambda_n^2}{l_n^2}|\partial_y \acute m^n(\xi)|^2+\frac{\lambda_n^2}{d_n^2}|\partial_z \acute m^n(\xi)|^2\Big)\ud \xi+\frac{E_{mag}(m^n)}{\mu_n}.$$
The limit(reduced) energy functional $E_0$ turns out to be
\begin{equation*}
 E_0(m) = \left\{
\begin{array}{rl}
4\int_{\mathbb{R}}|\partial_x m|^2\ud x+\frac{4}{\pi}\int_{\mathbb{R}}| m_2|^2\ud x, &\quad \text{if}\quad m_3\equiv 0 \\
+\infty, &  \quad \text{otherwise} \\
\end{array} \right.
\end{equation*}
and the admissible set $A_0$ for the reduced variational problem is
$$A_0=\{m\colon\mathbb{R}\to \mathbb{S}^2 \ | \ m(\pm\infty)=\pm 1\}.$$

The reduced or limit variational problem is to minimize the reduced energy functional $E_0$ over the admissible set $A_0,$ i.e.,
\begin{equation}
\label{minimization problem, reduced}
\inf_{m\in A_0}E_0(m).
\end{equation}

Define furthermore
$$A_0^3=\{m\in A_0\ | \ m_3\equiv 0\}.$$

The equality $\min_{m\in A_0}E_0(m)=\min_{m\in  A_0^3}E_0(m)$ suggests considering the minimization problem
$\min_{m\in A_0^3}E_0(m)$ instead of $\min_{m\in A_0}E_0(m).$
 Next define the notion of convergence of the magnetizations like in [\ref{bib:K.Kuhn},\ref{bib:Davit.dissertation}].
\newtheorem{Theorem}{Theorem}[section]
\newtheorem{Lemma}[Theorem]{Lemma}
\newtheorem{Corollary}[Theorem]{Corollary}
\newtheorem{Remark}[Theorem]{Remark}
\newtheorem{Definition}[Theorem]{Definition}

\begin{Definition}
\label{notion of convergence}
The sequence  $\{m^n\}\subset A(\Omega)$ is said to converge to $m^0\in A(\Omega)$ as $n$ goes to infinity if,
\begin{itemize}
\item[(i)] $\nabla m^n\rightharpoonup\nabla m^0 $ \ \ weakly in \ \ $L^2(\Omega)$
\item[(ii)] $m^n \rightarrow m^0$ \ \ strongly in \ \ $L_{loc}^2(\Omega)$
\end{itemize}

\end{Definition}

\begin{Theorem}[$\Gamma$-convergence]
\label{thm:gamma.convergence}
The reduced variational problem is the $\Gamma$-limit of the full variational problem with respect to the convergence stated in Definition~\ref{notion of convergence}. This amounts to the following three statements:
\begin{itemize}
 \item \textbf{Lower semicontinuity.} If a sequence of rescaled magnetizations $\{\acute m^n\}$ with $m^n\in A_n$ converges to some $m^0\in A(\Omega)$ in the sense of Definition~\ref{notion of convergence} then
    $$ E_0(m^0)\leq\liminf_{n\to\infty} \acute E(\acute m^n)$$
\item \textbf{Recovery sequence.} For every $m^0\in A_0$ and every sequence of pairs $\{(l_n,d_n)\}$ with $l_n,d_n\rightarrow 0,$ $c_n\to 0,$ there exists a sequence $\{m^n\}$ with $m^n\in\tilde A_n$ such that
    \begin{align*}
    &\acute m^n\to m^0 \quad \text{in the sense of Definition~\ref{notion of convergence}}\\
    &E_0(m^0)=\lim_{n\to\infty} \acute E(\acute m^n)
    \end{align*}
\item \textbf{Compactness.} Let  $\{(l_n,d_n)\}$ be such that $l_n,d_n\rightarrow 0$ and $c_n\to 0$. Assume $m^n\in \tilde A_n$ and $\acute E(\acute m^n)\leq C$ for  all $n\in\mathbb N.$ Then there exists a subsequence of $\{m^n\}$ (not relabeled) such that after a translation in the $x$ direction the sequence $\acute m^n$ converges to some $m^0\in A_0^3$ in the sense of Definition~\ref{notion of convergence}.
\end{itemize}
\end{Theorem}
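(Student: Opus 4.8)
\noindent\textit{Proof strategy.} The rescaling is built so that two effects drive the limit, and the plan is to exploit each separately: the transverse exchange coefficients $\lambda_n^2/l_n^2$ and $\lambda_n^2/d_n^2$ both diverge, which pins the limit to profiles depending on $x$ alone, while the magnetostatic quadratic form of Section~3, divided by $\mu_n=d_n^2|\ln c_n|$, produces in the limit both the constraint $m_3\equiv 0$ and the penalty $\frac{4}{\pi}\int m_2^2$. For the \textbf{lower semicontinuity} I would assume $\liminf\acute E(\acute m^n)<\infty$, pass to a subsequence attaining it with $\acute E(\acute m^n)\le C$, and first read off from the energy that $\int_{\Omega(1,1)}|\partial_y\acute m^n|^2$ and $\int_{\Omega(1,1)}|\partial_z\acute m^n|^2$ tend to $0$, since their coefficients diverge. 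Hence the weak limit satisfies $\partial_y m^0=\partial_z m^0=0$, so $m^0=m^0(x)$, and $|m^0|=1$ a.e.\ by the strong $L^2_{loc}$ convergence. Weak lower semicontinuity of the $L^2$ norm applied to $\partial_x$, together with the $y,z$-independence of $m^0$, yields $4\int_{\mathbb R}|\partial_x m^0|^2\le\liminf\int_{\Omega(1,1)}|\partial_x\acute m^n|^2$. For the magnetostatic contribution I would use the Section~3 approximation and its Fourier representation to bound $E_{mag}(m^n)/\mu_n$ below by its $m_2$-block, which converges to $\frac{4}{\pi}\int m_2^2$, while finiteness of the liminf forces the diverging $m_3$-block to vanish, i.e.\ $m_3^0\equiv 0$; adding the two lower bounds gives $E_0(m^0)\le\liminf\acute E(\acute m^n)$.

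For the \textbf{recovery sequence} the natural candidate is the cross-section-independent profile $\acute m^n(x,y,z)=m^0(x)$. If $E_0(m^0)=\infty$ this already makes the magnetostatic term blow up and there is nothing more to check, so I would concentrate on $m^0\in A_0^3$ of finite energy, first reducing by truncation to profiles equal to $\bar e$ outside a compact $x$-interval so that $m^n-\bar e\in H^1(\Omega_n)$ and $m^n\in\tilde A_n$. For such $m^n$ the transverse derivatives vanish identically, the $\partial_x$ term equals $4\int_{\mathbb R}|\partial_x m^0|^2$ for every $n$, and the magnetostatic term, now evaluated on a $y,z$-independent field with $m_3\equiv 0$, converges to $\frac{4}{\pi}\int m_2^2$ by the same Section~3 asymptotics used as an upper bound. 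A diagonal argument removes the truncation; checking that the truncation changes the rescaled energy only by $o(1)$ is the one delicate point here.

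For the \textbf{compactness} statement I would extract from $\acute E(\acute m^n)\le C$ a uniform $L^2(\Omega(1,1))$ bound on $\nabla\acute m^n$; combined with $|\acute m^n|=1$ this gives a uniform $H^1_{loc}$ bound, whence Rellich--Kondrachov produces a subsequence converging strongly in $L^2_{loc}$ and weakly in $H^1$. As above the transverse derivatives vanish in the limit, so $m^0=m^0(x)$, and the magnetostatic bound forces $m_3^0\equiv 0$, i.e.\ $m^0\in A_0^3$. To guarantee that $m^0$ is a genuine $180$ degree wall rather than a constant, I would translate in $x$ so that, say, the cross-sectional average of $\acute m_1^n$ vanishes at the origin; finiteness of the exchange and magnetostatic energy then forces $m^0$ to admit limits $\pm e_x$ at $\pm\infty$, and the centering rules out the wall escaping to infinity.

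\textbf{The main obstacle} is the sharp asymptotics of the Section~3 quadratic form under the scaling: one must prove that after dividing by $\mu_n=d_n^2|\ln c_n|$ the $m_2$-block of the form converges to the explicit constant $\frac{4}{\pi}$, while the $m_3$-block diverges. This is a delicate Fourier-space estimate in which the logarithmic factor $|\ln c_n|$ is exactly what renormalizes the in-plane edge charges, and it is here that the scaling law $d^2(\ln l-\ln d)$ is genuinely earned; the three $\Gamma$-convergence statements are, by comparison, soft consequences once this asymptotic is in hand.
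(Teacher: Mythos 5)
Your overall architecture is the same as the paper's---the identical recovery sequence $\acute m^n(x,y,z)=m^0(x)$, the identical compactness mechanism (the divergent coefficients $\lambda_n^2/l_n^2$, $\lambda_n^2/d_n^2$ kill the transverse derivatives, unit length plus Rellich yields a strong $L^2_{loc}$ limit, and lower semicontinuity forces $m_3^0\equiv 0$), and the same plan of reading the limit penalty off the Fourier estimates of Section~3---but your lower bound has a genuine gap. Theorem~\ref{representation for E_s} and Lemma~\ref{lem:estimates.on.I(l,d)} concern $E_s(m)$ only for magnetizations of the special form $m=m(x)$, constant on every cross section, whereas the competitors $m^n$ in the lower semicontinuity and compactness statements depend on all three variables; hence your step ``bound $E_{mag}(m^n)/\mu_n$ below by its $m_2$-block'' is not even defined as written. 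The paper bridges exactly this gap with the Kohn--Slastikov averaging device: it introduces the cross-sectional average $\bar m^n(x)=\frac{1}{4l_nd_n}\int_{R(l_n,d_n)}m^n\ud y\ud z$, shows via Lemma~\ref{lem:ineq.m1.m2} combined with the Poincar\'e inequality on the cross section that $|E_{mag}(m^n)-E_{mag}(\bar m^n)|\leq M_1\mu_n\sqrt{l_n^2+d_n^2}=o(\mu_n)$, and only then applies the Section~3 machinery to $\bar m^n$, using also $E_{mag}(\bar m^n)\geq E_s(\bar m^n)$ (legitimate since $E_{vs}(\bar m^n)=0$ by symmetry and $E_v\geq 0$). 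Without this averaging step, or a substitute for it, the magnetostatic half of your liminf inequality cannot start.

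A second omission sits in the same step: the lower bound of part (iii) of Lemma~\ref{lem:estimates.on.I(l,d)} is valid only for frequencies $|x|\leq 1/l_n$, so even for $\bar m^n$ the $m_2$-block does not directly dominate $\frac{4}{\pi}\int_{\mathbb R}|\bar m_2^n|^2$. The paper controls the missing Fourier tail by sacrificing a vanishing fraction $q_n$ of the exchange energy: by Plancherel, $q_n\int_{\Omega_n}|\partial_x\bar m^n|^2$ dominates $\frac{4q_nd_n}{l_n}\int_{|x|\geq 1/l_n}\big(|\widehat{\bar m_2^n}|^2+|\widehat{\bar m_3^n}|^2\big)$, and $q_n\to 0$ is chosen to equalize the two coefficients. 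These two devices---averaging and exchange borrowing---are where the real work of the liminf inequality lies, so your closing assessment that the three $\Gamma$-convergence statements are ``soft consequences'' of the Fourier asymptotics does not hold for this paper. Three smaller remarks: your truncation/diagonal argument in the recovery step is unnecessary, since $E_0(m^0)<\infty$ already implies $m^0-\bar e\in H^1(\mathbb R)$ and hence $m^n\in\tilde A_n$; in the recovery upper bound you must also treat the volume charge $E_v$, which is not part of the Section~3 quadratic form and is shown to be $o(\mu_n)$ separately in Lemma~\ref{lem:E_v.upperbound}; and your normalization $\mu_n=d_n^2|\ln c_n|$ follows Section~2's stated $\lambda_n=1/(c_n|\ln c_n|)$, but the balance under which both the exchange term and the $m_2$ penalty stay finite---and under which your own recovery computation closes---is $\lambda_n=1/\sqrt{c_n|\ln c_n|}$, $\mu_n=l_nd_n\sqrt{c_n|\ln c_n|}$, the scaling actually derived and used in Section~4.
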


\begin{Corollary}
Due to the above theorem we have
\begin{equation}
\label{conv.of.mins}
\lim_{n\to\infty}\min_{m^n\in\tilde A_n}\acute E(\acute m^n)=\min_{m\in A_0}E(m).
\end{equation}
\end{Corollary}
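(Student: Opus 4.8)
The plan is to deduce the convergence of minima from the three ingredients of Theorem~\ref{thm:gamma.convergence} exactly as in the fundamental theorem of $\Gamma$-convergence. Write $I_n=\min_{m\in\tilde A_n}\acute E(\acute m)$ and let $m^*\in A_0$ denote a minimizer of $E_0$, so that $E_0(m^*)=\min_{m\in A_0}E_0(m)$; existence of such a minimizer, together with the identity $\min_{A_0}E_0=\min_{A_0^3}E_0$, is already recorded above. It suffices to establish the two inequalities $\limsup_n I_n\le\min_{A_0}E_0$ and $\liminf_n I_n\ge\min_{A_0}E_0$, which together force the limit to exist and to equal $\min_{A_0}E_0$.

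For the upper bound I would invoke the recovery-sequence part of the theorem applied to $m^0=m^*$: it produces $m^n\in\tilde A_n$ with $\acute m^n\to m^*$ in the sense of Definition~\ref{notion of convergence} and $\acute E(\acute m^n)\to E_0(m^*)$. Since each such $m^n$ is an admissible competitor in the $n$-th rescaled problem, we have $I_n\le\acute E(\acute m^n)$, and passing to the limit yields $\limsup_n I_n\le E_0(m^*)=\min_{A_0}E_0$.

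For the lower bound I would start from \emph{near}-minimizers so as not to presuppose attainment: choose $m^n\in\tilde A_n$ with $\acute E(\acute m^n)\le I_n+\tfrac1n$, and pass to a subsequence along which $I_n\to\liminf_n I_n$. The upper bound just obtained shows $\sup_n\acute E(\acute m^n)<\infty$ along this subsequence. Because the domain $\Omega(l_n,d_n)=\mathbb R\times R(l_n,d_n)$ and both the exchange and the magnetostatic terms of $\acute E$ are invariant under translations in the $x$ variable, translating each $m^n$ in $x$ leaves its rescaled energy unchanged, so the hypotheses of the compactness statement are met. Compactness then provides, after such a translation and passage to a further subsequence, a limit $m^0\in A_0^3\subset A_0$ with $\acute m^n\to m^0$ in the sense of Definition~\ref{notion of convergence}. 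The lower-semicontinuity statement gives $E_0(m^0)\le\liminf_n\acute E(\acute m^n)=\liminf_n I_n$, while $m^0\in A_0$ forces $E_0(m^0)\ge\min_{A_0}E_0$; combining, $\min_{A_0}E_0\le\liminf_n I_n$.

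I do not expect a genuine obstacle here, since the entire analytic content is carried by Theorem~\ref{thm:gamma.convergence}. The only points demanding care are bookkeeping ones: using near-minimizers rather than exact minimizers so the lower-bound argument is unconditional; verifying that the $x$-translation used by the compactness statement does not alter $\acute E$, which is precisely where translation invariance of both the Dirichlet and the magnetostatic energy on the infinite cylinder enters; and checking that the compactness limit lands in $A_0$, so that its reduced energy is bounded below by $\min_{A_0}E_0$ and not merely by an infimum over a larger class.
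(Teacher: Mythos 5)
Your proof is correct and is exactly the argument the paper intends: the corollary is stated with no written proof beyond ``Due to the above theorem,'' meaning the standard fundamental-theorem-of-$\Gamma$-convergence deduction (recovery sequence for the $\limsup$ bound, compactness plus lower semicontinuity for the $\liminf$ bound), which is what you carry out. Your extra care --- using near-minimizers so attainment is not presupposed, noting that the $x$-translation in the compactness step leaves $\acute E$ unchanged, and checking the limit lands in $A_0^3\subset A_0$ --- is sound bookkeeping that the paper leaves implicit.
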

As will be seen later $\min_{m\in A_0}E(m)=\frac{16}{\sqrt\pi}.$

The next theorem establishes a rate of convergence for (\ref{conv.of.mins}).

\begin{Theorem}[Rate of convergence]
\label{thm:rate.conv}
For sufficiently small $d$ and $c$ the following bound holds:
$$
\Big|\min_{m\in\tilde A}\acute E(\acute m)-\min_{m\in A_0}E_0(m)\Big|\leq \frac{200}{\sqrt{|\ln c|}}+20l.
$$
\end{Theorem}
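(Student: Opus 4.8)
The plan is to establish the bound by proving two one‑sided estimates, each controlling one direction of the difference $\min_{m\in\tilde A}\acute E(\acute m)-\min_{m\in A_0}E_0(m)$. For the upper bound, I would take the minimizer $m^0\in A_0^3$ of the reduced problem (which exists and achieves the value $\tfrac{16}{\sqrt\pi}$) and construct an explicit competitor $m^n\in\tilde A$ by essentially freezing $m^0$ across the cross section, i.e. setting $\acute m(x,y,z)\approx m^0(x)$, then projecting back onto $\mathbb S^2$ and correcting near $x=\pm\infty$ so that the boundary condition $m-\bar e\in H^1$ is met. This is a quantitative version of the recovery‑sequence construction from Theorem~\ref{thm:gamma.convergence}. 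Plugging this competitor into $\acute E$, the exchange term in $x$ reproduces $4\int|\partial_x m^0|^2$ exactly, the transverse exchange terms contribute error controlled by the prefactors $\tfrac{\lambda_n^2}{l_n^2}$ and $\tfrac{\lambda_n^2}{d_n^2}$, and the magnetostatic term must be shown to approach $\tfrac{4}{\pi}\int|m_2^0|^2$ with an explicit remainder. Tracking every constant here yields a bound of the form $\tfrac{C_1}{\sqrt{|\ln c|}}+C_2 l$, and the content is verifying $C_1\le 200$, $C_2\le 20$ by careful bookkeeping rather than by any soft argument.

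For the lower bound I would start from an arbitrary near‑minimizer $m^n\in\tilde A$ and show $\acute E(\acute m^n)\ge \min_{A_0}E_0 - (\text{error})$. The key device is the quadratic‑form approximation of the magnetostatic energy established in the earlier section (the approximation of $E_{mag}$ by a quadratic form in $m_2,m_3$ valid for small cross‑section diameter), which I would invoke with its own explicit error term. Combined with the nonnegativity of the transverse exchange contributions and the fact that $\tfrac{4}{\pi}\int|m_2|^2 \le \tfrac{4}{\pi}\int(|m_2|^2+|m_3|^2)$, this reduces the full rescaled energy from below to the reduced functional $E_0$ evaluated along the $x$‑averaged profile, modulo quantified remainders. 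The crucial point is that any penalty forcing $m_3$ toward zero, and any cost of replacing $\acute m^n(x,\cdot)$ by its cross‑sectional average, carry error terms that are again absorbed into $\tfrac{C}{\sqrt{|\ln c|}}$ and $Cl$.

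The main obstacle I anticipate is the magnetostatic term: unlike the exchange energy it is nonlocal, so controlling its deviation from the local limit $\tfrac{4}{\pi}\int|m_2|^2$ with an \emph{explicit} constant requires a delicate analysis of the nonlocal kernel at small $d,c$, precisely where the logarithmic factor $|\ln c|$ in the rescaling $\lambda_n=\tfrac{1}{c_n|\ln c_n|}$ originates. Getting a clean $|\ln c|^{-1/2}$ rate (rather than a weaker power) will hinge on splitting this kernel into a near‑field piece, which produces the leading $\tfrac{4}{\pi}$ coefficient, and a far‑field piece, whose contribution I would estimate by its decay. I would therefore carry out the proof in the order: (1) record the reduced minimizer and its value; (2) build and evaluate the recovery competitor for the upper bound; (3) apply the quadratic‑form approximation and the $m_3\equiv 0$ reduction for the lower bound; (4) collect all explicit remainders and verify they sum to at most $\tfrac{200}{\sqrt{|\ln c|}}+20l$ for $d$ and $c$ sufficiently small.
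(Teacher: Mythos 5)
Your upper\mbox{-}bound argument is essentially the paper's: the competitor is exactly $m(x,y,z)=m^0(x/\lambda_n)$ with $m^0$ the explicit reduced minimizer, and the explicit remainders come from part (ii) of Lemma~\ref{lem:estimates.on.I(l,d)} (for $E_s$) and from Lemma~\ref{lem:E_v.upperbound} (for $E_v$); no projection onto $\mathbb S^2$ or correction at infinity is needed, since $m^0$ is already $\mathbb S^2$-valued and the frozen profile lies in $\tilde A$.

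The genuine gap is in your lower bound, at the step ``this reduces the full rescaled energy from below to the reduced functional $E_0$ evaluated along the $x$-averaged profile.'' The inequality you invoke, $\frac{4}{\pi}\int|m_2|^2\leq\frac{4}{\pi}\int(|m_2|^2+|m_3|^2)$, points the wrong way for this purpose: since $E_0=+\infty$ whenever $m_3\not\equiv 0$, one has \emph{pointwise} $E_0(m)\geq \tilde E(m):=4\int|\partial_x m|^2+\frac{4}{\pi}\int(|m_2|^2+|m_3|^2)$, so a lower bound by $\tilde E$ can never be upgraded to a lower bound by $E_0$ via that inequality. Worse, if you actually discard the $|m_3|^2$ term, the bound collapses: the functional $4\int|\partial_x m|^2+\frac{4}{\pi}\int|m_2|^2$ has infimum $0$ over $A_0$ (rotate slowly from $-e_x$ to $e_x$ in the $(m_1,m_3)$-plane). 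What closes the argument is not a comparison of functionals but an equality of \emph{minima}: $\min_{A_0}\tilde E=\min_{A_0}E_0=\frac{16}{\sqrt\pi}$, which comes from the explicit one-dimensional solution of $E_\alpha$ (minimal value $4\sqrt\alpha$, independent of the direction of the transverse component). So both transverse components must be kept in the lower bound all the way to the end, and only then does one pass to the minimal value.

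Second, and more fundamentally, bounding the one-dimensional energy of a line (or averaged) profile below by $\frac{4}{\sqrt\pi}$ requires that profile to belong to $A_0$, i.e.\ to satisfy $m(\pm\infty)=\pm 1$; otherwise the infimum is $0$ (constant profiles). Nothing in your proposal establishes that the profiles extracted from a minimizer $m\in\tilde A_n$ inherit this condition, and it is not automatic: the paper has to invoke the external fact that $\bar m(\pm\infty)=\pm 1$ for $m\in\tilde A_n$ (Lemma~3.3 of the nanowires reference), concludes $\acute m(\pm\infty,y,z)=\pm 1$ for a.e.\ $(y,z)\in R(1,1)$, applies the 1D bound $\geq\frac{4}{\sqrt\pi}$ on each such line, and integrates over the cross section of area $4$ to get $\frac{16}{\sqrt\pi}$. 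A further point you pass over: the lower bound of part (iii) of Lemma~\ref{lem:estimates.on.I(l,d)} is valid only for frequencies $|x|\leq \frac{1}{l_n}$, so the high-frequency part of $\int\big(|\widehat{\bar m_2}|^2+|\widehat{\bar m_3}|^2\big)$ must be controlled separately; the paper does this via Plancherel, borrowing exchange energy and using the a priori bound $\min\acute E<11$ on the minimizer. These are not bookkeeping remainders that get ``absorbed''; without them the lower half of the estimate does not get off the ground.
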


\section{An approximation of the magnetostatic energy}

Recall that the map $u$ is a weak solution of $\triangle u=\mathrm{div}m $ if and only if
\begin{equation}
\label{main.identity}
\int_{\mathbb{R}^3}\nabla{u}\cdot\nabla{\varphi}=\int_\Omega m\cdot\nabla \varphi \qquad \text{for all} \qquad \varphi\in C_0^{\infty}(\mathbb{R}^3).
\end{equation}
The left hand side of the above equality can be written as a sum volume and surface contributions as:
\begin{equation}
\label{modified main identity}
\int_{\mathbb{R}^3}\nabla{u}\cdot\nabla{\varphi}=-\int_\Omega \mathrm{div}m \cdot\varphi+\int_{\partial{\Omega}} m\cdot n\varphi \qquad \text{for all} \qquad \varphi\in C_0^{\infty}(\mathbb{R}^3),
\end{equation}
where $n$ is the outward unite normal to $\partial\Omega.$

Denoting
$$u_v(\xi)=-\int_\Omega\Gamma(\xi-\xi_1)(\mathrm{div}m)(\xi)\ud \xi\quad\text{and}\quad u_s(\xi)=\int_{\partial\Omega}\Gamma(\xi-\xi_1)(m\cdot n)(\xi)\ud \xi,$$
where $\Gamma(\xi)=\frac{1}{4\pi|\xi|}$ is the Green function in $\mathbb R^3$, we obtain
 \begin{equation}
\label{modified main identity for decompositions}
\int_{\mathbb{R}^3}\nabla{u_v}\cdot\nabla{\varphi}=\int_\Omega v\cdot\varphi,\qquad \int_{\mathbb{R}^3}\nabla{u_s}\cdot\nabla{\varphi}=\int_{\partial{\Omega}}s\cdot\varphi \qquad \text{for all} \qquad \varphi\in C_0^{\infty}(\mathbb{R}^3).
\end{equation}

Denote furthermore
$$E_v=\int_{\mathbb{R}^3}|\nabla u_v|^2,\qquad E_s=\int_{\mathbb{R}^3}|\nabla u_s|^2,\qquad E_{vs}=\int_{\mathbb{R}^3}\nabla u_v\cdot \nabla u_s.$$

Following Kohn and Slastikov as in [\ref{bib:Ko.Sl}] define the average of the magnetization vector over the cross section:
$$\bar m(x,y,z)=\frac{1}{4ld}\int_{R(l,d)}m\ud y\ud z, \qquad  (x,y,z)\in\Omega.$$
Like $m$ we extend $\bar m$ as $0$ outside $\Omega.$
 In this section we prove upper and lower bound on the magnetostatic energy for thin films. We start with the $E_s$ part of the energy.
If the parametrization
\begin{equation*}
\left\{
\begin{array}{rl}
y=y(t),  & \ t\in[0,2]  \\
z=z(t),  & \ t\in[0,2]  \\
\end{array} \right.
\end{equation*}
of $\partial R(l,d)$ is chosen by symmetry so that $y(t+1)=-y(t),$ $z(t+1)=-z(t)$
then Theorem~3.3.5 of [\ref{bib:Davit.dissertation}] delivers a formula for $E_s(m)$ in Fourier space for $m=m(x),$ namely:
\begin{Theorem}
\label{representation for E_s}
For every $m=m(x)\in A(\Omega)$ there holds
\begin{align*}
E_s(m)&=\frac{1}{4\pi^2}\int_{\mathbb R^3}\frac{1}{|k|^2}\Big\{|a|^2|\hat m_2(k_1)|^2+|b|^2|\hat m_3(k_1)|^2\\
&+\bar ab(\hat m_2(k_1)\overline{ \hat m_3(k_1)}+\overline{\hat m_2(k_1)}\hat m_3(k_1)\Big\}\ud k,
\end{align*}
where
\begin{align*}
a(k_2,k_3,\omega)&=-2i\int_0^1 z'(t)\sin(k_2y(t)+k_3z(t))\ud t,\\
b(k_2,k_3,\omega)&=2i\int_0^1 y'(t)\sin(k_2y(t)+k_3z(t))\ud t.
\end{align*}
\end{Theorem}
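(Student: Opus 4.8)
The plan is to reduce $E_s$ to a single convolution of the surface charge $s=m\cdot n$ against the Newtonian kernel and then pass to Fourier space. By the defining relation \eqref{modified main identity for decompositions}, $u_s=\Gamma*\sigma$ is the Newtonian potential of the surface charge measure $\sigma=s\,\mathcal H^2|_{\partial\Omega}$, so that $-\Delta u_s=\sigma$ and hence
$$E_s=\int_{\mathbb R^3}|\nabla u_s|^2=\int_{\partial\Omega}\int_{\partial\Omega}\frac{s(\xi)\,s(\xi')}{4\pi|\xi-\xi'|}\ud\mathcal H^2(\xi)\,\ud\mathcal H^2(\xi').$$
Inserting the Fourier representation $\frac{1}{4\pi|\xi|}=\frac{1}{(2\pi)^3}\int_{\mathbb R^3}\frac{e^{ik\cdot\xi}}{|k|^2}\ud k$ and interchanging the order of integration (here $s$ is real, so the two surface integrals produce a modulus square) turns this into $E_s=\frac{1}{(2\pi)^3}\int_{\mathbb R^3}\frac{|\hat\sigma(k)|^2}{|k|^2}\ud k$, where $\hat\sigma(k)=\int_{\partial\Omega}s\,e^{-ik\cdot\xi}\ud\mathcal H^2$.

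The decisive simplification occurs when one computes $\hat\sigma$. Parametrizing the lateral surface by $(x,t)\mapsto(x,y(t),z(t))$, the outward normal has no $x$-component and equals $(0,z'(t),-y'(t))/\sqrt{y'^2+z'^2}$, while the surface element is $\ud\mathcal H^2=\sqrt{y'^2+z'^2}\,\ud t\,\ud x$. The arclength factor therefore cancels, and the charge measure collapses to $\sigma=(m_2(x)z'(t)-m_3(x)y'(t))\,\ud t\,\ud x$. Consequently $\hat\sigma$ factorizes: the $x$-integration produces the one-dimensional transforms $\hat m_2(k_1),\hat m_3(k_1)$, and the $t$-integration over $[0,2]$ produces $\int_0^2 z'e^{-i\theta}\ud t$ and $\int_0^2 y'e^{-i\theta}\ud t$ with $\theta(t)=k_2y(t)+k_3z(t)$.

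I would then exploit the symmetry $y(t+1)=-y(t)$, $z(t+1)=-z(t)$ to fold the $t$-integrals over $[1,2]$ onto $[0,1]$. The substitution $t\mapsto t+1$ flips both derivatives $y',z'$ and the phase $\theta$, so each exponential pair combines into $e^{-i\theta}-e^{i\theta}=-2i\sin\theta$; this is exactly how the factors $a$ and $b$ of the statement arise, yielding $\hat\sigma=\sqrt{2\pi}\,(\hat m_2 a+\hat m_3 b)$ once the unitary normalization of $\hat m_i$ is accounted for. Squaring and observing that $a,b$ are purely imaginary (the surviving $t$-integrals have real integrands) shows that $a\bar b=\bar a b$ is real, which collapses the two cross terms into the symmetric combination $\bar a b(\hat m_2\overline{\hat m_3}+\overline{\hat m_2}\hat m_3)$ of the theorem; the prefactor $\tfrac1{4\pi^2}$ emerges from combining the $\tfrac1{(2\pi)^3}$ of the kernel with the $2\pi$ of the one-dimensional Fourier normalization.

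The step I expect to demand the most care is the justification of the Fourier manipulations, not the algebra. Since $\sigma$ is a measure supported on the unbounded surface $\partial\Omega$ rather than an $L^2$ function, interchanging the $k$-integration with the two surface integrals, and the very finiteness of $\int\frac{|\hat\sigma|^2}{|k|^2}\ud k$, must be argued from the standing hypothesis $m\in A(\Omega)$, i.e.\ $E(m)<\infty$, which controls the decay of $\hat m_2,\hat m_3$ and the behaviour of the integrand both near $k=0$ and at infinity. A clean route is to carry out the whole computation first for $m$ with $m_2,m_3\in C_c^\infty(\mathbb R)$, where every interchange is elementary, and then pass to the general case by density, using the finite-energy bound to control the limit.
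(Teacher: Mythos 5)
Your derivation is sound, and the most important thing to know is that the paper itself contains no proof of this statement: the theorem is quoted verbatim from Theorem~3.3.5 of the author's dissertation [\ref{bib:Davit.dissertation}], so your argument supplies material the paper delegates entirely to a citation. The route you take (Coulomb self-energy of the surface charge, then Fourier) is the natural one, and the key steps all check out: the arclength factor cancels against the normalization of $n=(0,z',-y')/\sqrt{y'^2+z'^2}$, so the charge has density $m_2(x)z'(t)-m_3(x)y'(t)$ in the $(x,t)$ coordinates; the antipodal symmetry $y(t+1)=-y(t)$, $z(t+1)=-z(t)$ folds the $t$-integral onto $[0,1]$ and gives exactly $\hat\sigma=\hat m_2\,a+\hat m_3\,b$ (the orientation ambiguity of the parametrization only flips $(a,b)$ to $(-a,-b)$, which leaves the quadratic expression invariant); $a$ and $b$ are purely imaginary, so $a\bar b=\bar a b$ is real and the cross terms collapse into the symmetric combination of the statement; and the count $\tfrac{1}{(2\pi)^3}\cdot 2\pi=\tfrac{1}{4\pi^2}$ confirms that the $\hat m_i$ in the theorem are the unitary one-dimensional transforms. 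A check worth adding: specializing your formula to the rectangle gives $|a|^2=16\sin^2(k_2l)\sin^2(k_3d)/k_3^2$, $|b|^2=16\sin^2(k_2l)\sin^2(k_3d)/k_2^2$, while $\bar a b$ is odd in $k_2$ and hence integrates to zero against the even factor $1/|k|^2$; this reproduces exactly, including the constant $4/\pi^2$, the simplified rectangle representation the paper displays immediately after the theorem, so your normalization conventions are the intended ones. The only incomplete point is the one you flag yourself: $\sigma$ is carried by an unbounded surface and is not an $L^2$ function, so the Plancherel-type interchanges require the approximation argument you outline (compute for $m_2,m_3\in C_c^\infty(\mathbb R)$, then pass to the limit using that finite energy forces $m_2,m_3\in L^2(\mathbb R)$); that density argument is the standard and adequate way to close this, and is no less rigorous than what the paper itself does for the analogous identity used in Lemma~\ref{lem:E_v.upperbound}.
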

Observe, that when the cross section is the rectangle $R(l,d)$ then the formula for $E_s$ can be easily simplified in more steps, namely, for any $m=m(x)\in A(\Omega),$ we have the following representation formula
$$E_s(m)=\frac{4}{\pi^2}\int_{\mathbb{R}^3}\frac{\sin^2(ly)\sin^2(dz)}{|\xi|^2}\bigg( \frac{|\widehat m_2(x)|^2}{z^2}+\frac{|\widehat m_3(x)|^2}{y^2}\bigg )\ud \xi.$$

Set now for convenience
$$I(l,d,x)=\int_{\mathbb{R}^2}\frac{\sin^2(ly)\sin^2(dz)}{y^2|\xi|^2}\ud y\ud z,$$
then
$$E_s(m)=\frac{4}{\pi^2}\int_{\mathbb{R}}\Big (I(l,d,x)|\widehat m_3(x)|^2+I(d,l,x)|\widehat m_2(x)|^2\Big )\ud x.$$

 The following functions will play an important role in this work. Denote for any $c>0,$

\begin{equation}
\label{a_c.and.b_c}
 a_c=\frac{c}{2}\int_0^\infty\frac{\sin^2 t}{t^2}\cdot\frac{1-e^{-\frac{2t}{c}}}{t}\ud t,\qquad b_c=a_{\frac{1}{c}}.
 \end{equation}

\begin{Lemma}
\label{lem:estimates.on.I(l,d)}
For any $0<d\leq l,$ we have

\begin{itemize}

\item[(i)] Then $I(d,l,x)\leq  2\pi lda_{c}\quad\text{ and}\quad I(l,d,x)\leq  2\pi ldb_{c}\quad\text{for all}\quad x\in\mathbb R,$

\item[(ii)] $I(d,l,x)\leq \pi ldc(3-\ln c),\quad\text{for all}\quad x\in \mathbb{R},$

\item[(iii)]

$ I(d,l,x)\geq\pi ldc|\ln c|\Big(1-\frac{5}{\sqrt{|\ln c}|}\Big) ,\quad \text{for all}\quad x\in\big[-\frac{1}{l}, \frac{1}{l}\big].$

 \end{itemize}

\end{Lemma}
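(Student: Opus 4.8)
The backbone of all three estimates is the exact evaluation of the inner integral. Using the classical identity $\int_{\mathbb R}\frac{\sin^2(lz)}{a^2+z^2}\ud z=\frac{\pi}{2a}\bigl(1-e^{-2la}\bigr)$ with $a=\sqrt{x^2+y^2}$, I would first reduce $I(d,l,x)$ to a one-dimensional integral
$$I(d,l,x)=\pi\int_0^\infty\frac{\sin^2(dy)}{y^2}\,g\bigl(\sqrt{x^2+y^2}\bigr)\ud y,\qquad g(s):=\frac{1-e^{-2ls}}{s},$$
and note that $g$ is decreasing on $(0,\infty)$, since its derivative has the sign of $e^{-u}(u+1)-1\le0$ with $u=2ls$. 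The same reduction applied to $I(l,d,x)$ integrates out $z$ (now carrying $\sin^2(dz)$) and leaves $\sin^2(ly)$ against the weight $g_d(s)=\frac{1-e^{-2ds}}{s}$.

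For part (i), since $\sqrt{x^2+y^2}\ge|y|$ and $g$ is decreasing, I replace $g(\sqrt{x^2+y^2})$ by $g(|y|)$, which effectively sets $x=0$ and makes the bound uniform in $x$. Substituting $t=dy$ and using $l/d=1/c$ turns the integral into $\pi d^2\int_0^\infty\frac{\sin^2 t}{t^2}\frac{1-e^{-2t/c}}{t}\ud t=\pi d^2\cdot\frac{2a_c}{c}$; since $d^2/c=ld$ this is exactly $2\pi ld\,a_c$. The bound $I(l,d,x)\le 2\pi ld\,b_c$ follows identically with $t=ly$, $d/l=c$, and the identity $b_c=a_{1/c}$.

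For part (ii) it suffices, by (i), to show $\frac{2a_c}{c}=\int_0^\infty\frac{\sin^2 t}{t^2}\frac{1-e^{-2t/c}}{t}\ud t\le 3-\ln c$. I split at $t=c/2$ and $t=1$ and use $\frac{1-e^{-2t/c}}{t}\le\min\{2/c,\,1/t\}$ together with $\sin^2 t\le\min\{t^2,1\}$: the piece on $(0,c/2)$ contributes at most $1$, the piece on $(c/2,1)$ at most $\int_{c/2}^1 t^{-1}\ud t=\ln2-\ln c$, and the tail $(1,\infty)$ at most $\int_1^\infty t^{-3}\ud t=\tfrac12$. Adding gives $\tfrac32+\ln2-\ln c\le 3-\ln c$.

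Part (iii) is the crux and the place where the $x$-dependence and the precise rate must be controlled. I would discard everything outside a window $y\in[L_1/l,\ \delta/d]$ on which all three factors are close to their leading forms, choosing the cutoffs to depend on $c$, e.g. $L_1=1/\delta=|\ln c|^{1/4}$. On this window $dy\le\delta$ gives $\sin^2(dy)\ge(dy)^2\bigl(1-\tfrac{\delta^2}{3}\bigr)$; the bound $|x|\le 1/l\le y/L_1$ gives $\sqrt{x^2+y^2}\le y\sqrt{1+L_1^{-2}}$, so $1-e^{-2l\sqrt{x^2+y^2}}\ge 1-e^{-2L_1}$ and $\tfrac{1}{\sqrt{x^2+y^2}}\ge \tfrac{1}{y\sqrt{1+L_1^{-2}}}$. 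The surviving integral is $\int_{L_1/l}^{\delta/d}\frac{\ud y}{y}=|\ln c|-\ln(L_1/\delta)$, so, using $\pi d^2=\pi ldc$,
$$I(d,l,x)\ge \pi ldc\Bigl(1-\tfrac{\delta^2}{3}\Bigr)\frac{1-e^{-2L_1}}{\sqrt{1+L_1^{-2}}}\Bigl(|\ln c|-\ln\tfrac{L_1}{\delta}\Bigr).$$
With the stated cutoffs each multiplicative correction is $O(|\ln c|^{-1/2})$ (the terms $\delta^2/3$ and $L_1^{-2}$ dominate, while $e^{-2L_1}$ and $\ln(L_1/\delta)/|\ln c|$ are smaller), so the product exceeds $1-5/\sqrt{|\ln c|}$ for small $c$, which is the claim. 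The main obstacle is exactly this bookkeeping: one must bound $g(\sqrt{x^2+y^2})$ uniformly over $|x|\le 1/l$ and choose the two cutoffs so that the relative loss from the log-length $\ln(L_1/\delta)/|\ln c|$ and from the pointwise factors stays below $5/\sqrt{|\ln c|}$.
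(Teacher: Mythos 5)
Your proposal is correct and follows essentially the same route as the paper's proof: both reduce $I$ to a one-dimensional integral via the identity $\int_0^\infty\frac{\sin^2(pt)}{t^2+q^2}\,\mathrm{d}t=\frac{\pi}{4q}\bigl(1-e^{-2pq}\bigr)$, obtain (i) from the monotonicity of $(1-e^{-s})/s$, obtain (ii) by a three-piece splitting of the integration range, and obtain (iii) by restricting to a logarithmically long window on which the integrand behaves like $1/t$ and tracking the multiplicative losses. The only differences are in bookkeeping --- your cutoffs $[L_1/l,\,\delta/d]$ with $L_1=1/\delta=|\ln c|^{1/4}$ replace the paper's window $[c^{1-\epsilon},c^{\epsilon}]$ with $\epsilon=|\ln c|^{-1/2}$, which shifts the dominant loss from the window length to the pointwise factors and in fact gives a slightly sharper constant than required.
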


\begin{proof}

We will use the following two identities, that are well known and be found in most advanced calculus and complex analysis textbooks:
\begin{equation}
\label{identities with sin}
 \int_0^{\infty}\frac{\sin^2t }{t^2} \ud t=\frac{\pi}{2},\qquad \int_0^{\infty} \frac{\sin^2(pt) }{t^2+q^2} \ud t=\frac{\pi}{4q}(1-e^{-2pq}),\qquad p,q>0.
\end{equation}
For any $x\neq 0$ we have by making a change of variables $y\to |x|y$, $z\to |x|z$ and putting $a=l|x|, b=d|x|,$
\begin{align*}
I(l,d,x)&=4\int_0^\infty\int_0^\infty\frac{\sin^2(ly)\sin^2(dz)}{y^2|\xi|^2}\ud y \ud z\\
&=\frac{4}{x^2}\int_0^\infty\int_0^\infty\frac{\sin^2(ay)\sin^2(bz)}{y^2(1+y^2+z^2)}\ud y \ud z.
\end{align*}
Utilizing now the second identity of (\ref{identities with sin}) and making a change of variables $y=\frac{t}{a}$ we obtain

\begin{align*}
 I(l,d,x)&=\frac{\pi}{x^2}\int_0^\infty\frac{\sin^2(ay)}{y^2}\cdot\frac{1-e^{-2b\sqrt{y^2+1}}}{\sqrt{y^2+1}}\ud y\\
&=\frac{2\pi ab}{x^2}\int_0^\infty\frac{\sin^2 t}{t^2}\cdot\frac{1-e^{-\frac{2b}{a}\sqrt{t^2+a^2}}}{\frac{2b}{a}\sqrt{t^2+a^2}}\ud t\\
&= 2 \pi ld\int_0^\infty\frac{\sin^2 t}{t^2}\cdot\frac{1-e^{-\frac{2d}{l}\sqrt{t^2+l^2x^2}}}{\frac{2d}{l}\sqrt{t^2+l^2x^2}}\ud t.
\end{align*}
By the inequality
$$\frac{2l}{d}\sqrt{t^2+d^2x^2}\ge \frac{2l}{d}t=\frac{2t}{c}$$
and the fact that the function $\frac{1-e^{-t}}{t}$ decreases over $(0,+\infty)$ we get
\begin{equation}
\label{ineq:I(l,d,x)leqa_c}
I(d,l,x)\leq 2\pi ld\int_0^{+\infty}\frac{\sin^2 t}{t^2}\cdot\ \frac{1-e^{-\frac{2t}{c}}}{ \frac{2t}{c}}\ud t=2\pi lda_c.
 \end{equation}
 Similarly we have $I(l,d,x)\leq2\pi ld b_c.$

For $(ii)$ we have that $I(d,l,x)\leq I_1+I_2+I_3,$
 where
\begin{align*}
I_1&=\pi ldc\int_0^{c}\frac{\sin^2 t}{t^2}\cdot\ \frac{1-e^{-\frac{2t}{c}}}{t}\ud t,\\
I_2&=\pi ldc\int_{c}^1\frac{\sin^2 t}{t^2}\cdot\ \frac{1-e^{-\frac{2t}{c}}}{t}\ud t,\\
I_3&=\pi ldc\int_1^{+\infty}\frac{\sin^2 t}{t^2}\cdot\ \frac{1-e^{-\frac{2t}{c}}}{t}\ud t.
\end{align*}
It is clear that

\begin{align*}
I_1&= 2\pi ld\int_0^{c}\frac{\sin^2 t}{t^2}\cdot\frac{1-e^{-\frac{2t}{c}}}{ \frac{2t}{c}}\ud t\leq 2\pi ld \int_0^{c}\ud t=2\pi ldc,\\
I_2&\leq \pi ldc\int_{c}^1\frac{1}{t}\ud t=-\pi ldc\ln c,\\
I_3&\leq \pi ldc\int_1^{+\infty}\frac{\sin^2 t}{t^2}\ud t\leq \pi ldc\int_1^{+\infty}\frac{1}{t^2}\ud t=\pi ldc.
\end{align*}
Therefore we obtain $I(d,l,x)\leq \pi ldc(3-\ln c)$ and $(ii)$ is proved.

To get a lower bound on $I(d,l,x)$ we note that the main contribution to the integral comes from the interval $[c,1].$ The idea is replacing in the previous argument $[c,1]$ by $[c^{1-\epsilon},c^{\epsilon}]$ where $\epsilon$ is a small positive number yet to be chosen. Assume $\epsilon<\frac{1}{3}$ and  $x\in\big[-\frac{1}{l},\frac{1}{l}\big].$ For any  $t\in[c^{1-\epsilon},c^{\epsilon}]$
we have
$$\frac{2l}{d}\sqrt{t^2+x^2d^2}\geq\frac{2t}{c}\geq 2c^{-\epsilon},$$
and
$$\sqrt{t^2+x^2d^2}\leq t+|x|d\leq t+\frac{d}{l}=t+c,$$
hence
\begin{equation}
\label{1}
I(d,l,x)\ge \pi ldc\int_{c^{1-\epsilon}}^{c^{\epsilon}}\frac{\sin^2 t}{t^2}\cdot \frac{1-e^{-2c^{-\epsilon}}}{t+c}\ud t.
\end{equation}
If we choose now $\epsilon=\frac{1}{\sqrt{|\ln c|}}$ then $c^{\epsilon}\to 0,$ thus we get,
$$1-e^{-2c^{-\epsilon}}>1-\frac{1}{2c^{-\epsilon}}=1-\frac{c^\epsilon}{2},$$

 $$\frac{\sin^2 t}{t^2}\geq \frac{\Big(t- \frac{t^3}{6}\Big)^2}{t^2}\geq 1-t^2, \qquad t\in[0,c^{\epsilon}].$$

Thus we obtain by (\ref{1}),
\begin{align*}
I(d,l,x)&\geq \pi ldc(1-c^{2\epsilon})\Big(1-\frac{c^\epsilon}{2}\Big) \int_{c^{1-\epsilon}}^{c^{\epsilon}}\frac{1}{t+c}\ud t\\
&\geq\pi ldc (1-2c^{\epsilon})\big(\ln(c+c^{\epsilon})-\ln(c+c^{1-\epsilon})\big).
\end{align*}
It is clear that
\begin{align*}
\ln(c+c^{1-\epsilon})&=\ln c+\ln(1+c^{-\epsilon})\\
&\leq \ln c+\ln(2c^{-\epsilon})\\
&\leq (1-2\epsilon)\ln c,
\end{align*}
and
$$\ln(c+c^{\epsilon})\ge \ln c^{\epsilon}=\epsilon \ln c,$$
$$1-2c^{\epsilon}=1-2e^{\epsilon\ln c}>1-2e^{-\frac{1}{\epsilon}}>1-2\epsilon.$$
Concluding we obtain
\begin{align*}
I(d,l,x)&\geq \pi (1-2c^{\epsilon})(1-3\epsilon)ldc|\ln c|\\
&\geq \pi ldc|\ln c|(1-5\epsilon)\\
&=\pi ldc|\ln c|\bigg(1-\frac{5}{\sqrt{|\ln c}|}\bigg).
\end{align*}

\end{proof}
\begin{Corollary}
\label{scaling of a_c}

We have that
$$\lim_{c\to 0}\frac{a_c}{c|\ln c|}=\frac{1}{2}$$
\end{Corollary}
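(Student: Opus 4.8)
The plan is to recognize that $a_c$ is, up to the geometric prefactor, nothing but the value of $I(d,l,x)$ at $x=0$, and then to sandwich it between the two bounds already established in Lemma~\ref{lem:estimates.on.I(l,d)}. Concretely, in the proof of that lemma one has (after the $l\leftrightarrow d$ swap of the displayed formula for $I(l,d,x)$) the representation
$$I(d,l,x)=2\pi ld\int_0^\infty\frac{\sin^2 t}{t^2}\cdot\frac{1-e^{-\frac{2l}{d}\sqrt{t^2+d^2x^2}}}{\frac{2l}{d}\sqrt{t^2+d^2x^2}}\ud t.$$
Setting $x=0$ turns $\sqrt{t^2+d^2x^2}$ into $t$ and $\frac{2l}{d}t$ into $\frac{2t}{c}$, so the inequality (\ref{ineq:I(l,d,x)leqa_c}) becomes an exact identity:
$$I(d,l,0)=2\pi ld\int_0^\infty\frac{\sin^2 t}{t^2}\cdot\frac{1-e^{-\frac{2t}{c}}}{\frac{2t}{c}}\ud t=2\pi ld\,a_c.$$
Thus $a_c=\frac{I(d,l,0)}{2\pi ld}$, which reduces the corollary to reading off the asymptotics of $I(d,l,0)$.

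First I would invoke part (ii) of Lemma~\ref{lem:estimates.on.I(l,d)} at $x=0$, giving $2\pi ld\,a_c\le \pi ldc(3-\ln c)$, i.e. $a_c\le \tfrac{c}{2}(3+|\ln c|)$ since $-\ln c=|\ln c|$ for $0<c<1$. Then, noting $0\in[-\tfrac1l,\tfrac1l]$, I would invoke part (iii) at $x=0$ to obtain $2\pi ld\,a_c\ge \pi ldc|\ln c|\big(1-\tfrac{5}{\sqrt{|\ln c|}}\big)$, i.e. $a_c\ge \tfrac{c}{2}|\ln c|\big(1-\tfrac{5}{\sqrt{|\ln c|}}\big)$. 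Dividing through by $c|\ln c|$ yields the two-sided estimate
$$\frac12\Big(1-\frac{5}{\sqrt{|\ln c|}}\Big)\le \frac{a_c}{c|\ln c|}\le \frac12\Big(1+\frac{3}{|\ln c|}\Big).$$

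Finally, since $|\ln c|\to+\infty$ as $c\to0^+$, both the lower and the upper bound converge to $\tfrac12$, and the squeeze theorem gives $\lim_{c\to0}\frac{a_c}{c|\ln c|}=\frac12$, as claimed. The only real subtlety—and hence the main thing to get right—is the observation that the bound (\ref{ineq:I(l,d,x)leqa_c}) is saturated precisely at $x=0$, so that $a_c$ is genuinely the normalized value $I(d,l,0)/(2\pi ld)$ rather than merely an upper bound for it; without this identification one secures only the upper inequality. Should one prefer to avoid quoting the lemma, the same conclusion follows from a direct analysis of $a_c=\frac{c}{2}\int_0^\infty\frac{\sin^2 t}{t^3}(1-e^{-2t/c})\ud t$: splitting the integral at the scales $c$ and $1$, the pieces over $(0,c)$ and $(1,\infty)$ are $O(1)$, while on $(c,1)$ one replaces $\frac{\sin^2 t}{t^3}$ by $\frac1t$ (the resulting error being absolutely integrable) and computes $\int_c^1\frac{1-e^{-2t/c}}{t}\ud t=|\ln c|+O(1)$, so that $a_c=\frac{c}{2}\big(|\ln c|+O(1)\big)$ and the same limit emerges.
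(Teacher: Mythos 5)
Your proof is correct and is essentially the paper's own argument: the paper's proof is the one-line remark that the corollary follows from parts (ii) and (iii) of Lemma~\ref{lem:estimates.on.I(l,d)}, and your identification $I(d,l,0)=2\pi ld\,a_c$ (saturation of (\ref{ineq:I(l,d,x)leqa_c}) at $x=0$) is exactly what makes that citation rigorous, since the quantity $I_1+I_2+I_3$ estimated in the proof of part (ii) is precisely $2\pi ld\,a_c$. One small slip in your closing remark: without the saturation identity one retains only the \emph{lower} bound on $a_c$ (part (iii) combined with $I\leq 2\pi ld\,a_c$), not the upper one as you state.
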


\begin{proof}
The proof follows from $(ii)$ and $(iii)$ parts of the above lemma.
\end{proof}

It is straightforward to see that due to the symmetry of the cross section $R(l,d)$ one has $E_{vs}(m)=0$ for all $m=m(x)\in A(\Omega).$ We estimate now the volume contribution $E_v$ to $E_{mag}.$

\begin{Lemma}
\label{lem:E_v.upperbound}
For any $0<d\leq l$ and $m=m(x)\in A$ the following bound holds:

\begin{equation}
 E_{v}(m)\leq M_m\Big(l^2d^2+ld^2\Big(1+\ln{\frac{l}{d}}\Big)\Big),
\end{equation}

where $M_m$ is a constant depending on the magnetization $m.$
\end{Lemma}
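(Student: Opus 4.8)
The plan is to pass to Fourier space, where the factorized geometry of the slab makes the dependence on $l$ and $d$ transparent, and then to estimate the resulting integral by splitting frequency space at the two scales $1/l$ and $1/d$. Since $m=m(x)$, the classical volume charge is $\mathrm{div}\,m=\partial_x m_1=:g$, a function of $x$ alone, so the density generating $u_v$ is $g(x)\chi_{[-l,l]}(y)\chi_{[-d,d]}(z)$. Writing $u_v$ through its symbol $-|k|^2\widehat{u_v}=\widehat{\mathrm{div}\,m\,\chi_\Omega}$, applying Plancherel, and using $\int_{-a}^a e^{-ik\zeta}\ud\zeta=\frac{2\sin(ak)}{k}$ for the two bounded directions, I would obtain
$$E_v=\frac{16}{(2\pi)^3}\int_{\mathbb R^3}\frac{|\widehat g(k_1)|^2\,\sin^2(lk_2)\,\sin^2(dk_3)}{k_2^2\,k_3^2\,(k_1^2+k_2^2+k_3^2)}\ud k .$$
This is the volume counterpart of the formula for $E_s$ used above, with all the $l,d$-dependence now carried by the cross-section factors $\sin^2(lk_2)/k_2^2$ and $\sin^2(dk_3)/k_3^2$.

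Next I would integrate out $k_1$. With $B=\sqrt{k_2^2+k_3^2}$ one has $\int_{\mathbb R}\frac{\ud k_1}{k_1^2+B^2}=\frac{\pi}{B}$, so bounding $|\widehat g(k_1)|^2\le\|\widehat g\|_{\infty}^2\le\|\partial_x m_1\|_{L^1}^2$ (the $m$-dependent quantity that the constant $M_m$ will absorb) collapses the estimate to the two-dimensional integral
$$E_v\le\frac{2\,\|\widehat g\|_\infty^2}{\pi^2}\,K(l,d),\qquad K(l,d)=\int_{\mathbb R^2}\frac{\sin^2(lk_2)\,\sin^2(dk_3)}{k_2^2\,k_3^2\,\sqrt{k_2^2+k_3^2}}\ud k_2\ud k_3 .$$
It is important that the $k_1$-integration leaves the mild factor $1/\sqrt{k_2^2+k_3^2}$ rather than the singular $1/(k_2^2+k_3^2)$: the latter would make $K$ diverge logarithmically at the frequency origin, whereas $1/\sqrt{k_2^2+k_3^2}$ is integrable there against the $O(l^2d^2)$ behaviour of the numerator.

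To bound $K$ I would split the $(k_2,k_3)$–plane along $k_2=1/l$ and $k_3=1/d$ and use $\sin^2(lk_2)/k_2^2\le\min(l^2,k_2^{-2})$ and likewise in $k_3$. On the principal block $\{|k_2|\le 1/l,\ |k_3|\le 1/d\}$ both sines sit in their quadratic regime, and the contribution is controlled by
$$l^2d^2\int_0^{1/l}\!\!\int_0^{1/d}\frac{\ud k_3\,\ud k_2}{\sqrt{k_2^2+k_3^2}} .$$
Because the rectangle has aspect ratio $l/d\gg 1$, this purely geometric integral equals, up to an absolute constant, $\frac1l\bigl(1+\ln\frac{l}{d}\bigr)$ (the inner integral is $\ln\frac{1/d+\sqrt{1/d^2+k_2^2}}{k_2}$, and integrating it in $k_2$ over $(0,1/l)$ produces the $1+\ln(l/d)$). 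Hence this block contributes at most a constant times $ld^2\bigl(1+\ln\frac{l}{d}\bigr)$, which is the dominant term. The remaining blocks (one or both frequencies large) are lower order: each is bounded by a constant times $ld^2$ or smaller, and the stray $l^2d^2$ in the statement is in any case dominated by $ld^2$. Collecting these gives $K\le C\bigl(l^2d^2+ld^2(1+\ln\frac{l}{d})\bigr)$ and hence the claim with $M_m$ a multiple of $\|\partial_x m_1\|_{L^1}^2$.

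The delicate point — and the one I would be most careful about — is extracting the logarithm honestly from the anisotropy rather than over- or under-counting. A naive bound such as $\sqrt{k_2^2+k_3^2}\ge k_2$ on the elongated block collapses the geometry and produces the false, larger order $l^2d$; the correct $ld^2\ln(l/d)$ appears only because on most of the tall rectangle $k_3\gg k_2$, so that $\sqrt{k_2^2+k_3^2}\approx k_3$ and the $k_3$-integration genuinely contributes a logarithm of the aspect ratio. The regularization of the frequency origin by the $k_1$-integration, which forces the use of $\|\widehat g\|_\infty$, i.e.\ of $\partial_x m_1\in L^1$, is a secondary but necessary ingredient. Everything else is the routine bookkeeping of the four frequency blocks.
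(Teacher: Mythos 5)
Your proof takes a genuinely different route from the paper's: the paper never passes to Fourier variables for $E_v$, but instead writes it through the Newtonian potential, integrates by parts in $x$ (which introduces the shifted profile $m^\ast=m_1\mp 1$), and then estimates everything by Young's inequality together with the cross-sectional bound (\ref{green.bound}). Within your framework, the representation formula for $E_v$, the $k_1$-integration producing the factor $\pi/\sqrt{k_2^2+k_3^2}$, and the block estimate of $K(l,d)$ — including your warning that the crude bound $\sqrt{k_2^2+k_3^2}\ge k_2$ would yield the false order $l^2d$ — are correct.

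The gap is at the one point where $m$ enters: the estimate $|\widehat g(k_1)|\le\|\widehat g\|_\infty\le\|\partial_x m_1\|_{L^1}$, with $g=\partial_x m_1$. The lemma is asserted for every $m=m(x)\in A$, and membership in $A$ only guarantees $\partial_x m_1\in L^2(\mathbb R)$ (finite exchange energy); it does not give $\partial_x m_1\in L^1$, and the Fourier transform of an $L^2$ function need not be bounded, so your constant $M_m$ may be infinite for admissible $m$. (No smallness is available near $k_1=0$ either: $\widehat g(0)=m_1(+\infty)-m_1(-\infty)=2$, the net charge of the wall.) The paper's integration by parts is precisely the device that avoids this; on the Fourier side it amounts to the splitting $\widehat g(k_1)=2+ik_1\widehat{m^\ast}(k_1)$, where $m^\ast\in L^2(\mathbb R)$ is quoted from [\ref{bib:Davit.dissertation}], so that only $\|m^\ast\|_{L^2}$ and $\|\partial_x m_1\|_{L^2}$ appear in the constant. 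Be warned, however, that this splitting is not compatible with your uniform $k_1$-integration: treating the two pieces separately and bounding $\int k_1^2|\widehat{m^\ast}|^2(k_1^2+B^2)^{-1}\ud k_1$ by $2\pi\|m^\ast\|_{L^2}^2$ produces a term of order $ld\,\|m^\ast\|_{L^2}^2$, which exceeds the claimed $l^2d^2+ld^2\bigl(1+\ln\frac{l}{d}\bigr)$; one instead needs pointwise-in-$k_1$ bounds on $J(k_1)=\int_{\mathbb R^2}\sin^2(lk_2)\sin^2(dk_3)\,k_2^{-2}k_3^{-2}(k_1^2+k_2^2+k_3^2)^{-1}\ud k_2\ud k_3$ (of order $l^2d^2\bigl(1+\ln^{+}\frac{1}{l|k_1|}\bigr)$ for $|k_1|\le 1/l$ and at most $Cl^2d^2$ beyond) together with a frequency splitting at $|k_1|\sim 1/l$. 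As written, your argument proves the lemma only on the subclass $\{\partial_x m_1\in L^1\}$ — enough for the explicit wall profiles to which the paper actually applies the lemma, whose derivatives decay exponentially, but not in the stated generality.
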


\begin{proof}

By density argument (\ref{modified main identity for decompositions}) holds for $\varphi=u_v$ thus,
$$E_v(m)=\int_{\mathbb{R}^3}|\nabla u_v|^2=-\int_{\Omega}\mathrm{div}m\cdot u_v=\int_{\Omega}\int_{\Omega}\Gamma(\xi-\xi_1)\mathrm{div}m(\xi)\mathrm{div}m(\xi_1)\ud\xi\ud\xi_1.$$
For any $m=m(x)\in A$ we have $\mathrm{div}m=\partial_x m_1(x),$ thus
$$E_v(m)=\frac{1}{4\pi}\int_{\Omega}\int_{\Omega}\frac{\partial_x m_1(x)\partial_x m_{1}(x_1)}{|\xi-\xi_1|}\ud\xi\ud\xi_1$$ where $\xi=(x,y,z)$ and $\xi_1=(x_1,y_1,z_1).$
We have by integration by parts
\begin{align*}
\int_{\mathbb{R}}\frac{\partial_x m_x(x)}{|\xi-\xi_1|}\ud x&=\int_{-\infty}^0\frac{\ud m^\ast(x)}{|\xi-\xi_1|}+\int_0^{+\infty}\frac{\ud m^\ast(x)}{|\xi-\xi_1|}\\
&=\frac{2}{\sqrt{x_1^2+(y-y_1)^2+(z-z_1)^2}}-\int_{\mathbb{R}}\frac{(x-x_1) m^\ast(x)}{|\xi-\xi_1|^3}\ud x,
\end{align*}
where
 $$
m^\ast(x) = \left\{
\begin{array}{rl}
m_1(x)+1 & \text{if } \ \ x\leq0 \\
 m_1(x)-1 & \text{if } \ \ x>0. \\
\end{array} \right.
$$
Then it has been shown in [\ref{bib:Davit.dissertation}] that $m^\ast(x)\in L^2(\mathbb R)$ and we can estimate
$E_v(m)\leq I_1+I_2,$ where

\begin{align*}
I_1&=\frac{1}{2\pi}\int_{R(l,d)}\int_{\Omega}\frac{|\partial_x m_1(x_1)|} {\sqrt{x_1^2+(y-y_1)^2+(z-z_1)^2}}\ud \xi_1\ud y\ud z\\
I_2&=\int_{\Omega}\int_{\Omega}\frac{|\partial_x m_1(x_1)m^{\ast}(x)|}{|\xi-\xi_1|^2}\ud\xi\ud\xi_1.
\end{align*}
We have furthermore
\begin{align*}
\int_{\mathbb{R}}\frac{|\partial_x m_1(x_1)|}{\sqrt{x_1^2+(y-y_1)^2+(z-z_1)^2}}&\ud x_1\\
&\leq\frac{1}{2}\int_{\mathbb{R}}\Big(|\partial_x m_1(x_1)|^2+\frac{1}{x_1^2+(y-y_1)^2+(z-z_1)^2}\Big)\ud x_1\\
&=\frac{1}{2}\|\partial_x m_1\|_{L^2(\mathbb R)}^2+\frac{\pi}{2\sqrt{(y-y_1)^2+(z-z_1)^2}}.
\end{align*}
Recall now Lemma~A2 from [\ref{bib:Davit.nanowires}], which asserts that for any point $(y_1,z_1)\in\mathbb R^2$ one has
\begin{equation}
\label{green.bound}
\int_{R(l,d)}\frac{1}{\sqrt{(y-y_1)^2+(z-z_1)^2}}\ud y\ud z\leq 10d\left(1+\ln\frac{d}{l}\right).
\end{equation}
Thus we obtain for $I_1,$
\begin{align*}
I_1&\leq \frac{4}{\pi}\|\partial_x m_1\|_{L^2(\mathbb R)}^2l^2d^2+\frac{1}{4}\int_{R(l,d)}\int_{R(l,d)}
\frac{1}{\sqrt{(y-y_1)^2+(z-z_1)^2}}\ud y_1\ud z_1\ud y\ud z\\
&\leq\frac{4}{\pi}\|\partial_x m_1\|_{L^2(\mathbb R)}^2l^2d^2+10ld^2\Big(1+\ln\frac{l}{d}\Big).
\end{align*}
By making a change of variables $\xi_2=\xi_1-\xi$ and utilizing again (\ref{green.bound}) we can estimate,
\begin{align*}
I_2&=\int_{\Omega}\int_{\mathbb{R}\times[-l-y,l-y]\times[-d-z,d-z]}\frac{|m^{\star}(x)|\cdot|\partial_xm_1(x_2+x)|}
{|\xi_2|^2}\ud\xi_2\ud \xi\\
&\leq\frac{1}{2}\int_{R(l,d)}\int_{\mathbb{R}\times[-l-y,l-y]\times[-d-z,d-z]}\int_{\mathbb R}\frac{|m^{\ast}(x)|^2+|\partial_xm_1(x_2+x)|^2}{|\xi_2|^2}\ud x\ud\xi_2\ud y\ud z\\
&=2ld(\|m^{\ast}\|_{L^2(\mathbb R)}^2+\|\partial_x m_1\|_{L^2(\mathbb R)}^2)
\int_{\mathbb{R}\times[-l-y,l-y]\times[-d-z,d-z]}\frac{\ud \xi_2}{|\xi_2|^2}\\
&=2\pi ld(\|m^{\ast}\|_{L^2(\mathbb R)}^2+\|\partial_x m_1\|_{L^2(\mathbb R)}^2)
\int_{R(l,d)}\frac{1}{\sqrt{(y_1-y)^2+(z_1-z)^2}}\ud y_1\ud z_1\\
&\leq 20\pi ld^2\Big(1+\ln{\frac{l}{d}}\Big)(\|m^{\ast}\|_{L^2(\mathbb R)}^2+\|\partial_x m_1\|_{L^2(\mathbb R)}^2).
\end{align*}
The summary of the estimates on $I_1$ and $I_2$ completes the proof.

\end{proof}

\section{The convergence of the energies}
Consider a sequence od domain-magnetization-energy triples $\{(\Omega_n, m^n, E(m^n) )\}$ where $\Omega_n=\mathbb R\times R(l_n,d_n),$ $m^n\in \tilde A_n=\tilde A(\Omega_n)$ and $l_n,c_n\to0.$
Lemma~\ref{lem:estimates.on.I(l,d)} suggests that for sufficiently big $n$ one can formally write for any $m=m(x),$
$$E_s(m^n)\approx \frac{8}{\pi}l_nd_na_{c_n}\int_{\mathbb{R}}|m_2^n(x)|^2\ud x+\frac{8}{\pi}l_nd_nb_{c_n}\int_{\mathbb{R}}|m_3^n(x)|^2\ud x$$

Next, Lemma~\ref{lem:a_c} asserts that $a_{c_n}$ scales like $c_n\ln c_n$ and $b_{c_n}\to \frac{\pi}{2}.$ Furthermore, by Lemma~\ref{lem:E_v.upperbound}, for a fixed $m^n=m^n(x)$ the summand $E_v(m^n)$ decays at least like $l_nd_n^2\ln^2 \frac{l_n}{d_n}$. Rescaling the magnetizations  $\acute m^n(x,y,z)=m^n(\lambda_nx,l_ny,d_nz)$ we can rewrite the exchange energy for all $m^n(x)\in A_n$ as
$$E_{ex}(m^n(x))=\frac{l_nd_n}{\lambda_n}\int_{\Omega(1,1)}\Big(|\partial_x \acute m^n(x)|^2+\frac{\lambda_n^2}{l_n^2}|\partial_y \acute m^n(x)|^2+\frac{\lambda_n^2}{d_n^2}|\partial_z \acute m^n(x)|^2\Big)\ud x,$$
and it is clear that $\acute m^n\colon \Omega(1,1)\to\mathbb{S}^2.$

Thus one would expect that for sufficiently big $n$ the approximation holds
$$E_{ex}(m^n(x))\approx \frac{4l_nd_n}{\lambda_n}\int_{\mathbb{R}}|\partial_x m^n(x)|^2\ud x $$
 and
$$E_s(m^n(x))\approx \frac{4}{\pi}l_nd_nc_n|\ln c_n|\lambda_n\int_{\mathbb{R}}(|m_2^n(x)|^2+\frac{\pi}{c_n|\ln c_n|}|m_3^n(x)|^2)\ud x.$$

This calculation suggests that the coefficients \  $\frac{l_nd_n}{\lambda_n}$ \ and \ $l_nd_nc_n|\ln c_n|\lambda_n$ \  should be taken equal and they will both be the scaling of \ $E(m^n).$ This leads to $\lambda_n=\frac{1}{\sqrt{c_n|\ln c_n|}}.$

\textbf{Proof of Thereom~\ref{thm:gamma.convergence}.}\\
\textbf{Lower semicontinuity.} One can without loss of generality assume that $\acute E(\acute m^n)\leq M$ for some $M>0$ and all $n\in\mathbb{N}.$
Following Kohn and Slastikov [\ref{bib:Ko.Sl}] let us prove that
\begin{equation}
\label{E_magbarm,m}
\liminf_{n\to\infty} \frac{E_{mag}(m^n)}{\mu_n}=\liminf_{n\to\infty} \frac{E_{ mag}(\bar m^n)}{\mu_n}.
\end{equation}
By the Poincar\'e inequality we have
$$\int_{\Omega}|m-\bar m|^2\leq C(d^2+l^2)\int_{\Omega}|\nabla m|^2\leq C(d^2+l^2)E(m).$$

Owing now to the third inequality in Lemma~\ref{lem:ineq.m1.m2} and the above inequality we have
\begin{align}
\label{Emag.bar}
|E_{mag}(m^n)-E_{mag}(\bar m^n)|\leq M_1\mu_n\sqrt{l_n^2+d_n^2},
\end{align}
for some $M_1,$ which implies (\ref{E_magbarm,m}).
Let now $\{q_n\}$ be a sequence with $0<q_n<1$ yet to be defined. We have by the Plancherel equality,
\begin{align*}
q_n\frac{E_{ex}(m^n)}{\mu_n}&\geq\frac{q_n}{\mu_n}\int_{\Omega_n}|\partial_x \bar m^n(\xi)|^2\ud\xi\\
 &=4\frac{q_nl_nd_n}{\mu_n}\int_{\mathbb{R}}|\widehat{\partial_x \bar m^n}(x)|^2\ud x\\
&=4\frac{q_nl_nd_n}{\mu_n}\int_{\mathbb{R}}|x\cdot \widehat{\bar m^n}(x)|^2\ud x\\
&\geq\frac{4q_nd_n}{l_n\mu_n}\int_{\mathbb{R}\setminus[-\frac{1}{l_n},\frac{1}{l_n}]}(|\widehat{\bar m_2^n}(x)|^2+|\widehat{\bar m_3^n}(x)|^2)\ud x,
\end{align*}
and according to part $(iii)$ of Lemma~\ref{lem:estimates.on.I(l,d)} we have for big $n$ as well
$$\frac{E_s(\bar m^n)}{\mu_n}\geq \frac{4}{\pi \mu_n} l_nd_nc_n|\ln c_n|\left(1-\frac{5}{\sqrt{|\ln c_n|}}\right)\int_{-\frac{1}{l_n}}^{\frac{1}{l_n}}\Big(\frac{1}{|\ln c_n|}|\widehat{\bar m_2^n}(x)|^2+|\widehat{\bar m_3^n}(x)|^2\Big)\ud x.$$
Now choose $q_n$ so that
$$\frac{4}{\pi \mu_n} l_nd_nc_n|\ln c_n|\left(1-\frac{5}{\sqrt{|\ln c_n|}}\right)=\frac{4q_nd_n}{l_n\mu_n}, $$
or
$$q_n=\frac{1}{\pi} l_nd_nc_n|\ln c_n|\left(1-\frac{5}{\sqrt{|\ln c_n|}}\right),$$
and it is clear that $q_n\to 0.$

Applying now the obtained inequalities, (\ref{E_magbarm,m}) and the convergence $\nabla \acute m^n\rightharpoonup\nabla m^0$ in $L^2(\Omega(1,1))$ we obtain

\begin{align*}
&\liminf_{n\to\infty}\frac{E(m^n)}{\mu_n}\geq \liminf_{n\to\infty}(1-q_n)\int_{\Omega(1,1)}|\partial_x \acute m^n|^2\ud\xi+\liminf_{n\to\infty}q_n\frac{E_{ex}(m^n)}{\mu_n}+\liminf_{n\to\infty}\frac{E_{mag}(m^n)}{\mu_n}\\
&= \liminf_{n\to\infty}(1-q_n)\int_{\Omega(1,1)}|\partial_x \acute m^n|^2\ud\xi+\liminf_{n\to\infty}q_n\frac{E_{ex}(m^n)}{\mu_n}+\liminf_{n\to\infty}\frac{E_{mag}(\bar m^n)}{\mu_n}\\
&\geq\liminf_{n\to\infty}(1-q_n)\int_{\Omega(1,1)}|\partial_x \acute m^n|^2\ud\xi+\liminf_{n\to\infty}q_n\frac{E_{ex}(m^n)}{\mu_n}+\liminf_{n\to\infty}\frac{E_s(\bar m^n)}{\mu_n}\\
&\geq4\int_{\mathbb{R}}|\partial_x m^0|^2+\liminf_{n\to\infty}\frac{4}{\pi \mu_n}l_nd_nc_n|\ln c_n|\left(1-\frac{5}{\sqrt{|\ln c_n|}}\right)\int_{\mathbb{R}}(|\bar m_2^n|^2+|\ln c_n||\bar m_3^n|^2)\\
 &=4\int_{\mathbb{R}}|\partial_x m^0|^2\ud x+\frac{4}{\pi}\liminf_{n\to\infty}\frac{1}{\lambda_n}\int_{\mathbb{R}}(|\bar m_2^n(x)|^2+|\ln c_n||\bar m_3^n(x)|^2)\ud x.
\end{align*}
It is then standard to prove that the convergence $\acute m^n\to m^0$ in $L_{loc}^2(\Omega(1,1))$ implies
$$
\liminf_{n\to\infty}\frac{1}{\lambda_n}\int_{\mathbb{R}}|\bar m_2^n(x)|^2\geq \int_{\mathbb{R}}|m_2^0(x)|^2\quad\text{and}\quad
\liminf_{n\to\infty}\frac{1}{\lambda_n}\int_{\mathbb{R}}|\bar m_3^n(x)|^2\geq \int_{\mathbb{R}}|m_3^0(x)|^2,
$$
thus since $|\ln c_n|\to\infty$ we conclude that
$$\liminf_{n\to\infty}\frac{E(m^n)}{\mu_n}\geq E_0(m^0).$$
$\textbf{Recovery sequence.}$ Let us prove that the sequence $m^n(x),$ where
 $$m^n(\lambda_nx,y,z )=m^0(x)\quad\text{ if }\quad  \xi\in\Omega(l_n,d_n)\quad \text{ and}\quad  m^n(\xi)=0\quad \text{ if}\quad  \xi\in\mathbb{R}^3\setminus\Omega(l_n,d_n)$$
  satisfies the required condition. If $m_3^0$ is not identically zero, then $E_0(m^0)=\infty$ and
  due to the \textit{lower semi-continuity} part of the foregoing theorem we have that
  $E_0(m^0)\leq\liminf_{n\to\infty}\acute E_n(m^n),$ thus the proof follows. Assume now that $m_3^0\equiv 0.$
 It remains to only prove the reverse inequality $\limsup_{n\to\infty}\acute E_n(\acute m^n)\leq E_0(m^0).$
  It is clear that
$$E(m^n)=4\mu_n\int_{\mathbb{R}}|\partial_x m^0|^2\ud x+E_{mag}(m^n).$$

Due to Lemma~\ref{ineq:I(l,d,x)leqa_c} and the Plancherel equality we have,

$$E_s(m^n)\leq \frac{4}{\pi} l_nd_nc_n(|\ln c_n|+3)\int_{\mathbb{R}}|m_2^0(x)|^2 \ud x,$$
thus

$$\limsup_{n\to\infty}\frac{E_s(m^n)}{\mu_n}\leq \frac{4}{\pi}\int_{\mathbb{R}}|m_2^0(x)|^2 \ud x.$$

We have furthermore by Lemma~\ref{lem:E_v.upperbound} that
$$\limsup_{n\to\infty}\frac{E_v(m^n)}{\mu_n}=0,$$
 thus combining all the obtained inequalities for the energy summands we discover
 $$\limsup_{n\to\infty}\acute E_n(\acute m^n)\leq E_0(m^0).$$
 $\textbf{Compactness.}$  The inequality $E(m^n)\leq C\mu_n$ implies

 $$\int_{\Omega(1,1)}|\partial_x \acute m^n|^2\leq C,\quad \int_{\Omega(1,1)}|\partial_y \acute m^n|^2\leq C\frac{l_n^2}{\lambda_n^2}\quad \text{and}\quad \int_{\Omega(1,1)}|\partial_z \acute m^n|^2\leq C\frac{d_n^2}{\lambda_n^2},$$
 thus a subsequence (not relabeled) of $\{\nabla \acute m^n\}$ has a weak limit $f=f(x)$ in $L^2(\Omega(1,1)).$
 On the other hand $\acute m^n$ has a unit length pointwise, thus a subsequence (not relabeled) of $\{\acute m^n\}$ has a strong local limit $m^0$ in
 $L^2(\Omega(1,1)).$ It is then straightforward to show that $m^0$ is weakly differentiable with $f=\nabla m^0,$ thus $\{\acute m^n\}$ converges
 to $m^0$ in the sense of Definition~\ref{notion of convergence}. It has been proven in [\ref{bib:Davit.nanowires}], that actually one can translate the subsequence $\{\acute m^n\}$ in the $x$ direction so that the limit $m^0$ satisfies $m^0(\pm\infty)=\pm 1.$ 
 Finally owing to the \textit{lower semi-continuity} part of the lemma we discover $E_0(m^0)\leq \liminf \acute E(\acute m^n)\leq C<\infty,$ thus $m_3^0\equiv 0,$ i.e., $m^0\in A_0^3.$

\section{The rate of convergence}
\label{sec:energy.scaling.1}

Recall that for any $\alpha>0$ one can explicitly determine the minima of the energy functional e.g., [\ref{bib:K.Kuhn},\ref{bib:Davit.dissertation},\ref{bib:Davit.nanowires}],
 $$E_{\alpha}(m)=\int_{\mathbb{R}}|\partial_x m(x)|^2\ud x+\alpha\int_{\mathbb{R}}(| m_y(x)|^2+|m_z(x)|^2)\ud x$$
 in the admissible set
 $$A_0=\{m\colon \mathbb{R}\to\mathbb{R}^3 \ : \ |m|=1, m(\pm\infty)=\pm 1\}.$$ The minimizer is given by the formula

 \begin{equation}
 \label{limit problem minimizer}
 m=m^{\alpha,\beta}=\bigg(\frac{e^{2\sqrt{\alpha}x}\cdot\beta^2-1}{e^{2\sqrt{\alpha}x}\cdot\beta^2+1},\  \frac{2\beta e^{\sqrt{\alpha}x}}{e^{2\sqrt{\alpha}x}\cdot\beta^2+1}\cos\theta,\ \frac{2\beta e^{\sqrt{\alpha}x}}{e^{2\sqrt{\alpha}x}\cdot\beta^2+1}\sin\theta\bigg),
 \end{equation}
 where $\beta\in\mathbb R.$
 Note that the minimal value of the energy does not depend on $\theta,$ i.e., it is invariant under rotations in the cross section plane, and for a fixed $\theta$ any minimizer can be obtained from $m^{\alpha}:=m^{\alpha,1}$ by translation in the $x$ direction. The minimizer $m^{\alpha}$ satisfies $m_x^{\alpha}(0)=0.$ The minimal value of $E_\alpha$ in $A_0$ will be $4\sqrt{\alpha}.$ Therefore, due to the fact $m^0\in A_0^3$, the minimizers $m^0$ of $E_0$ have the form

\begin{equation}
\label{lim.prob.min2}
m^0=\bigg(\frac{e^{\frac{2x}{\sqrt\pi}}\cdot\beta^2-1}{e^{\frac{2x}{\sqrt\pi}}\cdot\beta^2+1},\  \frac{2\beta e^{\frac{2x}{\sqrt\pi}}}{e^{\frac{2x}{\sqrt\pi}}\cdot\beta^2+1},\ 0\bigg).
\end{equation}
The minimal value of $E_0$ is $\frac{16}{\sqrt\pi}.$ \\

\textbf{Proof of Theorem~\ref{thm:rate.conv}}  We need to get accurate lower and upper bounds on $E(m)$. For an upper bound we choose the recovery sequence $m(x,y,z)=m^0(\frac{x}{\lambda_n}),$ where $m_3\equiv 0$ and $m^0$ is a minimizer of the energy functional
$$E_0(m)=4\int_{\mathbb R}|\partial_xm|^2\ud x+\frac{4}{\pi}\int_{\mathbb R}(|m_2(x)|^2+|m_3(x)|^2)\ud x.$$

Due to Lemma~\ref{lem:estimates.on.I(l,d)} we have for big $n$
$$E(m^0)\leq\frac{4l_nd_n}{\lambda_n}\int_{\mathbb R}|\partial_xm^0|^2\ud x+\frac{4l_nd_nc_n(3-\ln c_n)}{\pi}\int_{\mathbb R}|m_2^0(x)|^2\ud x+
E_v(m^0).$$

Next, due to Lemma~\ref{lem:E_v.upperbound} we get for big $n,$
\begin{align*}
\frac{E(m)}{\mu_n}&\leq4E_0(m)+\frac{12}{\pi |\ln c_n|}\int_{\mathbb R}|m_z^0(x)|^2\ud x+2M_{m^0}d_n\lambda_n(1-\ln c_n)\\
&\leq \frac{16}{\sqrt\pi}+\frac{10}{|\ln c_n|}+2\sqrt{l_nd_n|\ln c_n|},
\end{align*}
thus the minimal energy satisfies the inequality
\begin{equation}
\label{E.min.upeer.bound}
\frac{\min_{m\in\tilde A_n}E(m)}{\mu_n}- \frac{16}{\sqrt\pi}\leq\frac{10}{|\ln c_n|}+2\sqrt{l_nd_n|\ln c_n|}.
\end{equation}

Assume now $m\in\tilde A_n$ is an energy minimizer in $\Omega_n.$
We have that $I(l_n,d_n,x)\geq I(d_n,l_n,x)$, thus by Lemma~\ref{lem:estimates.on.I(l,d)} we have

$$E_{mag}(\bar m)\geq\frac{4}{\pi} l_nd_nc_n|\ln c_n|\Big(1-\frac{5}{\sqrt{|\ln c_n|}}\Big)
\int_{-\frac{1}{l_n}}^{\frac{1}{l_n}}(|\widehat{\bar m_2}|^2+|\widehat{\bar m_3}|^2)\ud x.$$
According to (\ref{E.min.upeer.bound}) we have for big $n,$
$$
\frac{\min_{m\in\tilde A_n}E(m)}{\mu_n}\leq\frac{16}{\sqrt{\pi}}+1<11.
$$
We have furthermore for big $n$ that
\begin{align*}
\int_{\mathbb R\setminus[-\frac{1}{l_n}, \frac{1}{l_n}]}(|\widehat{\bar m_2}|^2+|\widehat{\bar m_3}|^2)\ud x&\leq
l_n^2\int_{\mathbb R}(|x\cdot\widehat{\bar m_2}|^2+|x\cdot\widehat{\bar m_3}|^2)\ud x\\
&=l_n^2\int_{\mathbb R}(|\partial_x\bar m_2|^2+|\partial_x\bar m_3|^2)\ud x\\
&\leq \frac{l_n}{4d_n}\int_{\Omega_n}(|\partial_x m_2|^2+|\partial_x m_3|^2)\ud x\\
&\leq\frac{l_nE_{ex}( m)}{4d_n}\\
&\leq\frac{11l_n\mu_n}{4d_n},
\end{align*}

thus

$$\frac{4}{\pi}l_nd_nc_n|\ln c_n|\int_{\mathbb R\setminus[-\frac{1}{l_n}, \frac{1}{l_n}]}(|\widehat{\bar m_2}|^2+|\widehat{\bar m_3}|^2)\ud x\leq \frac{11}{\pi}l_n^2c_n|\ln c_n|\mu_n,$$

therefore we obtain
\begin{equation}
\label{E.mag.lower.bdd}
 E_{mag}(\bar m)\geq\frac{4}{\pi} l_nd_nc_n|\ln c_n|\Big(1-\frac{5}{\sqrt{|\ln c_n|}}\Big)
\int_{\mathbb R}(|\bar m_2|^2+|\bar m_3|^2)\ud x-\frac{11}{\pi}l_n^2c_n|\ln c_n|\mu_n
\end{equation}

It is straightforward to see using the definition of the average that
$$\int_{\Omega_n}(|m_2|^2+|m_3|^2)-\int_{\Omega_n}(|\bar m_2|^2+|\bar m_3|^2)=\int_{\Omega_n}(|m_2-\bar m_2|^2+|m_3-\bar m_3|^2),$$
thus by the Poincar\'e inequality we get for big $n,$
\begin{equation}
\label{11}
\int_{\Omega_n}(|m_2|^2+|m_3|^2)\leq \int_{\Omega_n}(|\bar m_2|^2+|\bar m_3|^2)+11C\mu_n(l_n^2+d_n^2).
\end{equation}

Next, due to the estimate (\ref{E_magbarm,m}) we have for big $n$ that

$$E_{mag}(m)\geq E_{mag}(\bar m)-M_1\mu_n\sqrt{d_n^2+l_n^2},$$
where $M_1=11C$ and $C$ is the Poincer\'e constant for $R(l_n,d_n)$.
Combining now the last inequality with \ref{E.mag.lower.bdd} and (\ref{11}), for bin $n$ we discover

\begin{equation}
\label{E.mag.lower.2}
E_{mag}(m)\geq \frac{4}{\pi} l_nd_nc_n|\ln c_n|\Big(1-\frac{5}{\sqrt{|\ln c_n|}}\Big)
\int_{\mathbb R}(|m_2|^2+|m_3|^2)\ud x-\frac{11}{\pi}l_n^2c_n|\ln c_n|\mu_n-12C\mu_n\sqrt{l_n^2+d_n^2}.
\end{equation}

For the whole energy we obtain for big $n,$
\begin{align*}
\frac{E(m)}{ \mu_n }\geq4\Big(1-\frac{5}{\sqrt{|\ln c_n|}}\Big)\Big(\int_{\Omega(1,1)}(|\partial_x \acute m|^2\ud\xi+\frac{1}{\pi}\int_{\Omega(1,1)}(|\acute m_2|^2+|\acute m_3|^2)\ud \xi\Big)-20Cl_n.
\end{align*}
It has been shown in [\ref{bib:Davit.nanowires},Lemma 3.3], that if $m\in A_n$ then $\bar m(\pm\infty)=\pm 1,$ thus we have $\acute m(\pm\infty,y,z)=\pm 1$ on a full measure subset $Q$ of $R(1,1).$ Therefore we have for any $(y,z)\in Q$ that
$$\int_{\mathbb R}(|\partial_x \acute m(x,y,z)|^2\ud x+\frac{1}{\pi}\int_{\mathbb R}(|\acute m_2(x,y,z)|^2+|\acute m_3(x,y,z)|^2)\ud x\geq \frac{4}{\sqrt \pi},$$
which gives
$$\int_{\Omega(1,1)}(|\partial_x \acute m|^2\ud\xi+\frac{1}{\pi}\int_{\Omega(1,1)}(|\acute m_2|^2+|\acute m_3|^2)\ud \xi\geq \frac{16}{\sqrt \pi}.$$
Finally we get for the energies,
$$
\frac{E(m)}{\mu_n}-\frac{16}{\sqrt{\pi}}\geq-\frac{200}{\sqrt{|\ln c_n|}}-20Cl_n.
$$
A combination of the last inequality and (\ref{E.min.upeer.bound}) completes the proof.
In conclusion, let us mention that for sufficiently small $d$ and $l$ the minimizer $m$ must have almost the shape of $m^{\alpha,\beta}$ i.e., must be a 
transverse wall.

\appendix
\section{Appendix}

In this section we recall a key inequality and study the function $a_c.$

\begin{Lemma}
\label{lem:ineq.m1.m2}
For any vector fields $m_1,m_2\in M(\Omega)$ with finite energies there holds
$$|E_{mag}(m_1)-E_{mag}(m_2)|\leq \|m_1-m_2\|_{L^2(\Omega)}^2+2\|m_1-m_2\|_{L^2(\Omega)}\sqrt{E_{mag}(m_1)}$$
\end{Lemma}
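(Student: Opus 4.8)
The plan is to exploit the fact that $E_{mag}$ is the quadratic form of a symmetric, positive semidefinite bilinear form on magnetizations, and to combine the associated Cauchy--Schwarz inequality with a crude $L^2$ bound on the magnetostatic energy. For a magnetization $m$ let $u[m]$ denote its induced potential, the weak solution of $\triangle u=\mathrm{div}\,m$, and set
$$B(m,m')=\int_{\mathbb{R}^3}\nabla u[m]\cdot\nabla u[m'].$$
Testing (\ref{main.identity}) with $\varphi=u[m']$, which is legitimate by a density argument since $u[m']$ lies in the homogeneous Sobolev space $\dot{H}^1(\mathbb{R}^3)$, yields the symmetric representation $B(m,m')=\int_\Omega m\cdot\nabla u[m']=\int_\Omega m'\cdot\nabla u[m]$, and in particular $E_{mag}(m)=B(m,m)\ge 0$.

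First I would record the two elementary consequences of this structure. Taking $m'=m$ in the representation formula and applying the ordinary Cauchy--Schwarz inequality in $L^2$ gives
$$E_{mag}(m)=\int_\Omega m\cdot\nabla u[m]\le\|m\|_{L^2(\Omega)}\|\nabla u[m]\|_{L^2(\mathbb{R}^3)}=\|m\|_{L^2(\Omega)}\sqrt{E_{mag}(m)},$$
hence the crude bound $E_{mag}(m)\le\|m\|_{L^2(\Omega)}^2$. Since $B$ is symmetric and positive semidefinite, it also satisfies its own Cauchy--Schwarz inequality $|B(m,m')|\le\sqrt{E_{mag}(m)}\sqrt{E_{mag}(m')}$.

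Next I would expand the difference. Writing $w=m_1-m_2$ and using bilinearity,
$$E_{mag}(m_1)-E_{mag}(m_2)=B(m_1,m_1)-B(m_1-w,\,m_1-w)=2B(m_1,w)-B(w,w).$$
The triangle inequality together with the two facts above gives
$$|E_{mag}(m_1)-E_{mag}(m_2)|\le 2|B(m_1,w)|+B(w,w)\le 2\sqrt{E_{mag}(m_1)}\sqrt{E_{mag}(w)}+\|w\|_{L^2(\Omega)}^2,$$
and since $\sqrt{E_{mag}(w)}\le\|w\|_{L^2(\Omega)}$ by the crude bound, this is exactly the claimed estimate.

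The only delicate point is the justification of the weak-formulation manipulations: one must check that $u[m]\in\dot{H}^1(\mathbb{R}^3)$ for $m\in M(\Omega)$ of finite energy, so that $u[m']$ is an admissible test function in (\ref{main.identity}) through density of $C_0^\infty$ in the homogeneous Sobolev space, and that $B$ is thereby well defined and symmetric. These are standard facts about the magnetostatic potential, and once they are in place the remaining algebra is immediate.
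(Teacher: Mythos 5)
Your proof is correct, and it is worth noting that the paper itself offers no argument for this lemma at all: the stated ``proof'' is only the remark that it is trivial and can be found in [\ref{bib:K.Kuhn}]. Your argument is precisely the standard one behind that citation: linearity of $m\mapsto\nabla u[m]$ makes $E_{mag}$ a positive semidefinite quadratic form $B$, the expansion $E_{mag}(m_1)-E_{mag}(m_2)=2B(m_1,w)-B(w,w)$ with $w=m_1-m_2$, Cauchy--Schwarz for $B$, and the bound $E_{mag}(w)\le\|w\|_{L^2(\Omega)}^2$ obtained by testing the weak formulation with $u[w]$ itself, together give the claim; an equivalent phrasing, common in the literature, is that $m\mapsto\sqrt{E_{mag}(m)}=\|\nabla u[m]\|_{L^2(\mathbb{R}^3)}$ is a seminorm dominated by $\|m\|_{L^2(\Omega)}$, after which the estimate is the triangle inequality applied to a difference of squares. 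So you have not found a different route, but you have supplied, with the correct functional-analytic justifications, the proof the paper delegates to a reference.
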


\begin{proof}
The proof is trivial and can be found in [\ref{bib:K.Kuhn}].
\end{proof}

Consider now $c\to a_c$ as a map from $(0,+\infty)$ to $(0,+\infty).$

\begin{Lemma}
\label{lem:a_c}
The function $a_c$ has the following properties:
\begin{itemize}
\item[(i)]  $a_c$ increases in $(0,+\infty)$
\item[(ii)] $\lim_{c\to 0}\frac{a_c}{c|\ln c|}=\frac{1}{2}$

\item[(iii)] $\lim_{c\to +\infty}a_c=\frac{\pi}{2}.$
\end{itemize}
\end{Lemma}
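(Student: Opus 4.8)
The plan is to derive all three properties from a single rewriting in which the $\tfrac{c}{2}$ prefactor is absorbed into the integrand. Setting
$$a_c=\int_0^\infty\frac{\sin^2 t}{t^2}\,h(c,t)\ud t,\qquad h(c,t)=\frac{c\bigl(1-e^{-\frac{2t}{c}}\bigr)}{2t},$$
I first record the two elementary bounds $0\le h(c,t)\le 1$, the upper one being the inequality $1-e^{-s}\le s$ with $s=\frac{2t}{c}$. Since $\frac{\sin^2 t}{t^2}$ is integrable on $(0,\infty)$ with $\int_0^\infty\frac{\sin^2 t}{t^2}\ud t=\frac{\pi}{2}$ by the first identity in \eqref{identities with sin}, this representation is the common engine for the monotonicity (i), the limit (iii), and the asymptotics (ii).

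For (i) I would prove the stronger pointwise fact that $c\mapsto h(c,t)$ is nondecreasing for each fixed $t>0$; integrating against the nonnegative weight $\frac{\sin^2 t}{t^2}$ then transfers monotonicity to $a_c$. Writing $\phi(c)=c\bigl(1-e^{-2t/c}\bigr)=2t\,h(c,t)$ and differentiating, I expect
$$\phi'(c)=1-\Bigl(1+\tfrac{2t}{c}\Bigr)e^{-\frac{2t}{c}}=1-(1+x)e^{-x}\ge 0,\qquad x:=\tfrac{2t}{c}>0,$$
which is exactly the standard inequality $e^x\ge 1+x$. Hence $h(\cdot,t)$ increases, and so does $a_c$. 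This differentiation-under-the-integral step, justified by local integrability of the derivative of the integrand, is the only place that uses a genuine (if elementary) estimate, so I regard it as the crux of the lemma.

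Part (iii) then follows painlessly from the same computation: $h(c,t)$ increases in $c$ to $\lim_{c\to\infty}h(c,t)=1$ (since $1-e^{-2t/c}=\frac{2t}{c}+o(1/c)$), while staying bounded by the integrable majorant $\frac{\sin^2 t}{t^2}$. Monotone convergence (equivalently dominated convergence) therefore yields
$$\lim_{c\to+\infty}a_c=\int_0^\infty\frac{\sin^2 t}{t^2}\ud t=\frac{\pi}{2}.$$

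Finally, for (ii) I would avoid fresh computation by identifying $a_c$ with a boundary-charge integral already estimated. Evaluating the representation of $I(d,l,x)$ obtained in the proof of Lemma~\ref{lem:estimates.on.I(l,d)} at $x=0$ collapses $\sqrt{t^2+d^2x^2}$ to $t$ and gives the exact identity $a_c=\frac{I(d,l,0)}{2\pi ld}$; cf.\ \eqref{ineq:I(l,d,x)leqa_c}. Since $0\in\bigl[-\frac1l,\frac1l\bigr]$, parts (ii) and (iii) of Lemma~\ref{lem:estimates.on.I(l,d)} apply at $x=0$ and squeeze
$$\frac12\Bigl(1-\tfrac{5}{\sqrt{|\ln c|}}\Bigr)\le\frac{a_c}{c|\ln c|}\le\frac12\Bigl(1+\tfrac{3}{|\ln c|}\Bigr),$$
both sides tending to $\frac12$ as $c\to0$. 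This is precisely Corollary~\ref{scaling of a_c}, so (ii) requires no additional work.
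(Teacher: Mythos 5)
Your proof is correct and follows essentially the same route as the paper: part (i) rests on the pointwise monotonicity in $c$ of the integrand (your inequality $1-(1+x)e^{-x}\ge 0$ is precisely the paper's cited fact that $(1-e^{-s})/s$ decreases), and part (ii) is obtained, exactly as in the paper's Corollary~\ref{scaling of a_c}, by squeezing $a_c$ between the bounds of parts (ii) and (iii) of Lemma~\ref{lem:estimates.on.I(l,d)}, which your identity $a_c=I(d,l,0)/(2\pi ld)$ makes explicit. The only cosmetic difference is in (iii), where you invoke monotone/dominated convergence with majorant $\sin^2 t/t^2$, while the paper performs an explicit quantitative squeeze on $[0,\sqrt{c}\,]$; both arguments are valid.
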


\begin{proof}
The first property follows from the fact that the function $\frac{1-e^{-t}}{t}$ decreases over $(0, +\infty).$ The second property is Corollary~\ref{scaling of a_c}. Assume now $c\ge 4.$ It is clear that
$$\frac{1-e^{-\frac{2t}{c}}}{\frac{2t}{c}}\ge 1-\frac{t}{c}\quad\text{if}\quad t\in\Big[0,\frac{c}{2}\Big],$$
thus taking into account the inequality $\sqrt{c}\leq \frac{c}{2})$ we discover
$$\frac{1-e^{-\frac{2t}{c}}}{\frac{2t}{c}}\ge 1-\frac{t}{c}\ge 1-\frac{1}{\sqrt{c}}\quad\text{if}\quad t\in[0,\sqrt{c}].$$
Therefore for $a_c$ we have on one hand
$$\liminf_{c\to\infty }a_c\ge \liminf_{c\to\infty } \Big(1-\frac{1}{\sqrt c}\Big)\int_0^{\sqrt c}\frac{\sin^2 t}{t^2}\ud t=\int_0^{+\infty}\frac{\sin^2 t}{t^2}\ud t=\frac{\pi}{2}, $$
but on the other hand
$$a_c\le \int_0^{+\infty}\frac{\sin^2 t}{t^2}\ud t=\frac{\pi}{2} \quad \text{for any}\quad c>0,$$
which achieves the proof.

\end{proof}

\textbf{\large{Acknowledgement}}\\

The present results are part of the author's PhD thesis. The author is thankful to his supervisor Dr. Prof. S. M\"uller for suggesting the topic and many good advices. This work was supported by scholarships by Max-Planck Institute for Mathematics in the Sciences in Leipzig, Germany, and HCM for Mathematics in Bonn, Germany.

\end{document}